\newcommand{\overbow}[1]{
   \tikz [baseline = (N.base), every node/.style={}] {
      \node [inner sep = 0pt] (N) {$#1$};
      \draw [line width = 0.4pt] plot [smooth, tension=1.3] coordinates {
         ($(N.north west) + (0.1ex,0)$)
         ($(N.north)      + (0,0.5ex)$)
         ($(N.north east) + (0,0)$)
      };
   }
}
\newcommand{\C}{\mathbb C}
\newcommand{\R}{\mathbb R}
\newcommand{\Z}{\mathbb Z}
\newcommand{\N}{\mathbb N}
\newcommand{\re}{\mathrm{Re}}
\newcommand{\im}{\mathrm{Im}}
\newcommand{\e}{\varepsilon}
\newtheorem{thm}{Theorem}[section]
\newtheorem{lem}[thm]{Lemma}
\newtheorem{prop}[thm]{Proposition}
\newtheorem{dfn}[thm]{Definition}
\newtheorem{cor}[thm]{Corollary}
\newtheorem{rk}[thm]{Remark}
\numberwithin{equation}{section}
\title{Hyers-Ulam stability of parabolic M\"obius difference equation}
\author[]{Young Woo Nam}
\affil[]{\small Mathematics Section,
      College of Science and Technology,
      Hongik University, 339--701 
      Sejong, Korea} 
\begin{document}

\date{}
\maketitle

\begin{abstract}
The linear fractional map $ g(z) = \frac{az+ b}{cz + d} $ with complex number coefficients on the Riemann sphere where $ ad-bc = 1 $ and $ a+d = \pm 2 $ is called {\em parabolic} M\"obius map. Let $ \{ b_n \}_{n \in \mathbb{N}_0} $ be the solution of the parabolic M\"obius difference equation $ b_{n+1} = g(b_n) $ for every $ n \in \mathbb{N}_0 $. Then the sequence $ \{ b_n \}_{n \in \mathbb{N}_0} $ has no Hyers-Ulam stability. 
\end{abstract}

\section{Introduction}

In 1940, Ulam suggested a problem about the stability of  approximate homomorphism between metric groups \cite{ulam}. In detail, if $ f $ is a map from the metric group to itself and it satisfies that
$$ d(f(xy),\ f(x)f(y)) < \e $$
for all $ x,y $ in the given group $ G $, then does the homomorphism $ h $ exist such that $ d(h(x), f(x)) < \delta $ for all $ x \in G $? 
Hyers gave an affirmative answer \cite{hyers} for Cauchy's additive equation in Banach space. Hyers-Ulam stability has been searched in the field of functional equations and differential equations for decades. More recently, this stability has been considered for difference equations. For instance, see \cite{BPX,jung1,jungnam,popa,XB}. 
\\ \smallskip
Denote the set of natural number by $ \N $ and denote the set $ \N \cup \{0\} $ by $ \N_0 $. Let $ \{ b_n \}_{n \in \N_0} $ be the sequence determined by the difference equation
\begin{align} \label{eq-sequence b-n}
b_{n+1} = F(n,b_n)
\end{align}
where $ F $ is the map from $ \N_0 \times \C $ into $ \C $ with an initial point $ b_0 \in \C $ for $ n \in \N_0 $. If the map $  F(n, \cdot) $ is independent of $ n $, then we use the notion $  F(b_n) $ instead of $ F(n,b_n) $. 

\bigskip
\begin{dfn} \label{def-hyers ulam stability}
Let $ \{a_n\}_{n \in \N_0} $ be the complex valued sequence which satisfies the inequality
\begin{align} \label{eq-sequence a-n}
| a_{n+1} - F(n, a_n) | \leq \e
\end{align} 
for a given $ \e > 0 $ and for all $ n \in \N_0 $ where $ | \cdot | $ is the absolute value of complex number. For every sequence $ \{a_n\}_{n \in \N_0} $ satisfying \eqref{eq-sequence a-n} if there exists a sequence $ \{b_n\}_{n \in \N_0} $ satisfying \eqref{eq-sequence b-n} for each $ n \in \N_0 $ and $ | a_n - b_n | \leq K(\e) $ for all $ n \in \N $ where $ K(\e) \rightarrow 0 $ as $ \e \rightarrow 0 $, then the difference equation \eqref{eq-sequence b-n} is called that it has {\em Hyers-Ulam stability}. 
\end{dfn}
%
%In this article, we show that non linear difference equation
%
%\begin{align} \label{eq-sequence of parabolic map}
%b_{n+1} = g\,(b_n)
%\end{align}
%
%has no Hyers-Ulam stability where $ g(z) = \frac{az+b}{cz+d} $   satisfy that $ ad-bc=1 $ and $ a+d = \pm 2 $ for the complex numbers $ a,b,c $ and $ d $. 
%
%
\subsection*{Parabolic M\"obius map}
The linear fractional map $ z \mapsto \frac{az+b}{cz+d} $ on the Riemann sphere, $ \hat{\C} = \C \cup \{ \infty \} $ is called M\"obius map where $ a,b,c $ and $ d \in \C $ and $ ad-bc \neq 0 $. Every M\"obius map has the matrix representation $\left(\begin{smallmatrix}a&b\\c&d\end{smallmatrix}\right)$. Since the map $ z \mapsto \frac{az+b}{cz+d} $ is the same as $ z \mapsto \frac{qaz+qb}{qcz+qd} $, the matrix $ \left(\begin{smallmatrix}a&b\\c&d\end{smallmatrix}\right) $ and $ \left(\begin{smallmatrix}qa&qb\\qc&qd\end{smallmatrix}\right) $ represent the same map. Thus we may assume that the determinant of matrix representation, namely, $ ad-bc $ is one. Denote the trace of the matrix representation is $ a+d $ under the condition $ ad-bc = 1 $. 
%$ z \mapsto \frac{\frac{az}{\sqrt{ad-bc}}+\frac{b}{\sqrt{ad-bc}}}{\frac{cz}{\sqrt{ad-bc}}+\frac{d}{\sqrt{ad-bc}}} $, we may assume that $ ad-bc =1 $. 
The map $ g(z) = \frac{az+b}{cz+d} $ where $ ad-bc=1 $ and $ a+d = \pm 2 $ is called {\em parabolic} M\"obius map. If $ c \neq 0 $, then we define $ g(\infty) = \frac{a}{c} $ and $ g\left( -\frac{d}{c} \right) = \infty $.

\subsection*{Non stability}
Let $ g(z) = \frac{az+b}{cz+d} $ be the linear fractional map for $ c \neq 0 $. Thus $ g\left( -\frac{d}{c} \right) = \infty $. However, the meaning of Hyers-Ulam stability is not clear to the inequality $ | a_{n+1} - g(a_n) | \leq \e $ where $ a_n = -\frac{d}{c} $ for some $ n \in \N $. Then we consider that some region in $ \C $ which contains the whole sequence $ \{a_n\}_{n\in \N_0} $ and is disjoint from $ \{ g^{-n}(\infty) \ |\ n \in \N \} $ for Hyers-Ulam stability. This region is dependent on $ -\frac{d}{c} $ which is determined by the map. Then the common region of all $ g $ is not considered unless some additional  specific condition guarantees the existence of common region for all sequences from different maps. Similarly, the number $ K(\e) $ depends on each sequence. The non stability in the sense of Hyers-Ulam is discussed in \cite{bbc,BPRX}. For difference equations non stability means that 
\smallskip \\
"At any given $ \e>0 $, there is no region which contains the sequence $ \{a_n\}_{n\in \N_0} $ satisfying the definition of Hyers-Ulam stability".
\smallskip \\
The initial point $ a_0 $ may be chosen arbitrarily on some dense subset of $  \C $ for non stability in the Hyers-Ulam.

\subsection*{Main content}
In Section 2 and Section 3, invariant circles and the extended line (defined later) are constructed and the convergence of $ g^n(z) $ to the unique fixed point is proved as $ n \rightarrow \pm \infty $ along invariant circle. In Section 4, we show the non-stability of the sequence from parabolic M\"obius difference equation %\eqref{eq-sequence of parabolic map}
 in the sense of Hyers-Ulam. In particular, a periodic sequence $ \{a_n\}_{n \in \N_0} $ which satisfies \eqref{eq-sequence a-n} %in an invariant circle under $ g $ 
is constructed and it is compared with any sequence $ \{b_n\}_{n \in \N_0} $ defined by the equation \eqref{eq-sequence b-n}. In Section 5 and Section 6, we show the non-stability of real parabolic M\"obius difference equation defined on the extended real line. Without any finite invariant circle, the proof requires different calculation for the real parabolic M\"obius map. %Most proofs are for the M\"obius map for $ c \neq 0 $. The only parabolic M\"obius map which is the polynomial is the translation. The non stability of the translation on $ \C $ (or $ \R $) is proved in Proposition \ref{prop-no hyers-ulam of translation}. 
%It is worth to mention that %Hyers-Ulam stability of the difference equation with complex valued sequence is defined but 
%the actual non-linear difference equation with complex valued sequence for Hyers-Ulam stability on $ \C $ is suggested on this article. 

\section{Horocycles}
Let $ g(z) = \frac{az + b}{cz + d} $ be the parabolic M\"obius map where $ ad-bc=1 $, $ c \neq 0 $. The extended line is defined as the union of the straight line and $ \{ \infty \} $ in the Riemann sphere. We define horocycle at the fixed point of $ g $ in Definition \ref{def-horocycle} and show that each of these is invariant under $ g $. 

\bigskip
\begin{lem}
Let $ g(z) = \frac{az + b}{cz + d} $ be M\"obius map where $ ad-bc=1 $, $ c \neq 0 $. If $ g $ is the parabolic M\"obius map, that is, $ a+d = \pm 2 $, then $ g $ has the unique fixed point, say $ \alpha $, and $ \alpha = \frac{a-d}{2c} $.
\end{lem}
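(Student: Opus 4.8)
The plan is to solve the fixed-point equation $g(z)=z$ explicitly. Since $c\neq 0$ we have $g(\infty)=a/c\in\C$, so $\infty$ is not a fixed point and any fixed point must lie in $\C$; there we may clear denominators in $\frac{az+b}{cz+d}=z$ to obtain
\[
cz^2+(d-a)z-b=0 .
\]
Because $c\neq 0$ this is a genuine quadratic, so it has exactly one or two roots in $\C$, and the number of fixed points is governed entirely by its discriminant.

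The key computation is to show that the discriminant $\Delta=(d-a)^2+4bc$ vanishes. I would substitute the two standing hypotheses: from $a+d=\pm 2$ we get $(a+d)^2=4$, whence $(a-d)^2=(a+d)^2-4ad=4-4ad$; and from $ad-bc=1$ we get $bc=ad-1$, whence $4bc=4ad-4$. Adding these two identities gives $\Delta=(4-4ad)+(4ad-4)=0$, so the quadratic has a single (double) root
\[
z=-\frac{d-a}{2c}=\frac{a-d}{2c},
\]
which is exactly the claimed $\alpha$.

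There is no genuine obstacle here — the argument is a short direct calculation — so the only point worth flagging is the double use of the hypothesis $c\neq 0$: it rules out $\infty$ as a fixed point, and it guarantees that the fixed-point equation is a true quadratic rather than linear, so that a vanishing discriminant really does correspond to a unique fixed point rather than to a degeneracy. One could instead run the argument through the matrix representation, using that a parabolic map is conjugate to $z\mapsto z+1$ on $\hat{\C}$ (whose unique fixed point is $\infty$), but the direct quadratic computation is cleaner and yields the explicit formula for $\alpha$ at no extra cost.
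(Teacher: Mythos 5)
Your proposal is correct and follows the same route as the paper: both reduce the fixed-point condition to the quadratic $cz^2-(a-d)z-b=0$ and identify its unique root as $\frac{a-d}{2c}$. You merely supply details the paper leaves implicit, namely the discriminant computation $(d-a)^2+4bc=0$ from $a+d=\pm2$ and $ad-bc=1$, and the observation that $c\neq 0$ excludes $\infty$ as a fixed point.
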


\begin{proof}
The fixed point of $ g $ satisfies the equation $ cz^2 - (a-d)z - b = 0 .$ The unique solution of the above quadratic equation is $ \frac{a-d}{2c} $. 
\end{proof}

\bigskip
\begin{lem}  \label{lem-invariant extended line}
Let $ g(z) = \frac{az + b}{cz + d} $ be the parabolic M\"obius map for $ c \neq 0 $. Then the extended line which contains $ \frac{a}{c} $, $ -\frac{d}{c} $ and $ \infty $, say $ L_{\infty} $, is invariant under $ g $. 
\end{lem}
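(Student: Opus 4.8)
The plan is to lean on two standard facts about M\"obius transformations: each M\"obius map sends the family of \emph{generalized circles} (Euclidean circles together with extended lines) bijectively onto itself, a generalized circle being an extended line exactly when it passes through $\infty$; and a generalized circle is uniquely determined by any three of its distinct points. Granting these, proving $g(L_{\infty}) = L_{\infty}$ reduces to exhibiting three distinct points on $L_{\infty}$ whose images under $g$ again lie on $L_{\infty}$.

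The three points I would use are $\infty$, $-\frac{d}{c}$, and the unique fixed point $\alpha = \frac{a-d}{2c}$ from the preceding lemma. By the definition of $g$ at $\infty$ and at $-\frac{d}{c}$ we have $g(\infty) = \frac{a}{c} \in L_{\infty}$ and $g\bigl(-\frac{d}{c}\bigr) = \infty \in L_{\infty}$, while $g(\alpha) = \alpha$. The only identity that needs checking is
\[
\alpha \;=\; \frac{a-d}{2c} \;=\; \frac12\left( \frac{a}{c} + \left(-\frac{d}{c}\right) \right),
\]
which says that $\alpha$ is the Euclidean midpoint of the two finite points defining $L_{\infty}$, hence $\alpha \in L_{\infty}$. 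Thus $g(L_{\infty})$ is a generalized circle through $\infty$ (namely $g\bigl(-\frac{d}{c}\bigr)$), hence an extended line, and it contains the three distinct points $\frac{a}{c}, \alpha, \infty$, all of which lie on $L_{\infty}$; by the uniqueness of generalized circles through three given points, $g(L_{\infty}) = L_{\infty}$. The distinctness of $\frac{a}{c}, \alpha, \infty$ uses $a + d \neq 0$: any coincidence among $\frac{a}{c}, -\frac{d}{c}, \alpha$ forces $a = -d$, contradicting $a + d = \pm 2$.

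I expect no serious obstacle; the substance is the midpoint identity plus the nondegeneracy bookkeeping, and the latter always reduces to $a + d \neq 0$. If one wishes to bypass the circle-preserving property, an alternative is to conjugate $g$ by the translation $w \mapsto w + \alpha$, turning it into a parabolic map fixing $0$, necessarily of the form $w \mapsto \frac{w}{cw+1}$; in the coordinate $u = 1/w$ this becomes the translation $u \mapsto u + c$, and one checks directly that under these two changes of coordinates $L_{\infty}$ becomes a Euclidean line through the origin invariant under translation by $c$. I would present the first argument as the proof and regard the second as the picture that motivates the horocycles introduced next.
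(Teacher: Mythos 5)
Your proposal is correct and is essentially the paper's own argument: both rely on the circle-preserving property of M\"obius maps, the midpoint identity $\alpha = \frac12\bigl(\frac{a}{c} + \bigl(-\frac{d}{c}\bigr)\bigr)$ placing the fixed point on $L_{\infty}$, and the observations $g(\infty) = \frac{a}{c}$, $g\bigl(-\frac{d}{c}\bigr) = \infty$ to pin down three points of $g(L_{\infty})$ on $L_{\infty}$. Your explicit check that the three points are distinct (via $a+d = \pm 2$) is a small piece of bookkeeping the paper leaves implicit, but the route is the same.
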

\begin{proof}
The image of circle or line under M\"obius map is circle or line. The extended line $ L_{\infty} $ contains the fixed point, $ \alpha = \frac{a-d}{2c} $ because $ \alpha = \frac{1}{2} \left( \frac{a}{c} + \left(-\frac{d}{c} \right) \right) $. Observe that $ g(\infty) = \frac{a}{c} $ and $ g \left(-\frac{d}{c}\right) = \infty $. Thus since $ g(L_{\infty}) $ contains $ \infty $, $ \alpha $ and $ \frac{a}{c} $, $ g(L_{\infty}) $ is the extended line and it is the same as $ L_{\infty} $. 
\end{proof} 

%There are invariant circles under parabolic M\"obius map, $ g $. Define the {\em horocycles} at the fixed point of $ g $ as these circles or the invariant extended line as follows. 

\bigskip
\begin{dfn} \label{def-horocycle}
Horocycle at the fixed point of parabolic M\"obius map is defined as follows
\begin{enumerate}
\item the extended line $ L_{\infty} $ which contains $ \frac{a}{c} $ and $ -\frac{d}{c} $ or 
\item Every circle which intersects $ L_{\infty} $ at the fixed point of $ g $ tangentially. 
\end{enumerate}
\end{dfn}
By the above definition, the center of each horocycle is contained in the straight line which contains $ \alpha $ and is perpendicular to $ L_{\infty} $. Define the straight line as follows
\begin{align} \label{eq-centers of horocycles}
\ell = \left\{ z \colon \left| z + \frac{d}{c} \right| = \left| z - \frac{a}{c} \right| \right\} .
\end{align}
Then every horocycle is the circle $ S_p = \{ z \colon |z-p| = | \alpha - p| \} $ where $ p $ is a point in $ \ell $. 
\medskip
\begin{rk}
The horocycle in the above definition is slightly different from the usual definition in hyperbolic geometry \cite{beardon} as the set in the complex plane. Our definition of horocycle contains the extended line for convenience. 
\end{rk}
\bigskip
\begin{prop}  \label{prop-horocycle is invariant}
Let $ g(z) = \frac{az + b}{cz + d} $ be the M\"obius map where $ ad-bc=1 $ and $ c \neq 0 $. Suppose that $ g $ is parabolic, that is, $ a+d = \pm 2 $. Then every horocycle at $ \alpha $ of $ g $ is invariant under $ g $, that is, $ z $ satisfies that $ |z-p| = |p-\alpha| $ if and only if $ |g(z)-p| = |\alpha - p| $. 
\end{prop}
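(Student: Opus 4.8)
The plan is to exploit the standard fact that a parabolic Möbius map is conjugate to a translation, together with the characterization of horocycles as circles tangent to $L_\infty$ at $\alpha$, and to verify the invariance by a direct computation after normalization. First I would conjugate $g$ by the Möbius map $w = \phi(z) = \frac{1}{z-\alpha}$, which sends the fixed point $\alpha$ to $\infty$. Since $g$ is parabolic with unique fixed point $\alpha$, the conjugated map $\tilde g = \phi \circ g \circ \phi^{-1}$ fixes $\infty$ only, hence is an affine map $w \mapsto \lambda w + \mu$; because $g$ is parabolic (trace $\pm 2$), one checks $\lambda = 1$, so $\tilde g(w) = w + \mu$ is a translation by some $\mu \in \C \setminus \{0\}$. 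A short computation gives $\mu = \mp c$ (using $a + d = \pm 2$ and $ad - bc = 1$), but the precise value is not needed — only that it is a nonzero real multiple of $c$, or more to the point, that it is a fixed nonzero complex number.

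Next I would identify what horocycles become under $\phi$. A circle through $\alpha$ tangent to $L_\infty$ at $\alpha$ is sent by $\phi$ to a straight line: tangency at $\alpha$ (which maps to $\infty$) forces the image to be an unbounded curve, i.e. a line, and the common tangent direction at $\alpha$ becomes the direction of that line, so all these image lines are parallel to $\phi(L_\infty)$. Likewise $L_\infty$ itself, passing through $\alpha$, maps to a line through $\infty$, i.e. a line, in that same parallel family. Thus the family of all horocycles at $\alpha$ corresponds under $\phi$ exactly to the family of all lines parallel to a fixed direction. The translation $\tilde g(w) = w + \mu$ maps each such line to a line in the same parallel family; it maps the line $\ell_0 := \phi(L_\infty)$ to itself precisely because $\mu$ is parallel to $\ell_0$ — equivalently, because $L_\infty$ is $g$-invariant by Lemma \ref{lem-invariant extended line}, so its image $\ell_0$ is $\tilde g$-invariant, which pins down the direction of $\mu$. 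Consequently $\tilde g$ permutes the parallel lines, and in fact fixes each one setwise (a translation by a vector parallel to a family of parallel lines fixes every member of the family). Transporting back through $\phi^{-1}$, $g$ fixes every horocycle at $\alpha$ setwise, which is exactly the claimed equivalence: $|z - p| = |p - \alpha|$ iff $|g(z) - p| = |\alpha - p|$.

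An alternative, more computational route — which I would include if the conjugacy argument feels too slick for the paper's style — is to normalize directly. After an affine change of coordinates moving $\alpha$ to $0$, one may assume $\alpha = 0$; then $g$ parabolic with $g(0) = 0$ and $g'(0) = 1$ (the derivative at the parabolic fixed point equals $1$ since the trace is $\pm 2$) forces $g(z) = \frac{z}{cz + 1}$ up to the sign normalization, where now $c$ is the relevant coefficient. Horocycles at $0$ are the circles $|z - p| = |p|$ with $p$ on the line $\ell$ through $0$ perpendicular to $L_\infty$; parametrizing $z$ on such a circle and substituting into $g(z) = \frac{z}{cz+1}$, one computes $|g(z) - p|^2 - |p|^2$ and checks it vanishes identically on the circle, using $|z|^2 = 2\,\re(p\bar z)$ (the defining relation of the circle $|z-p|^2 = |p|^2$) and the fact that $p$ lies on $\ell$ (which ties the argument of $p$ to that of $c$). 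The main obstacle is precisely this last bookkeeping: one must use the geometric position of $p$ on $\ell$ — equivalently, that $\ell \perp L_\infty$ and $L_\infty$ is $g$-invariant — to kill the cross terms; without it the identity is false (a circle tangent to $L_\infty$ but on a different line is not horocyclic and is not invariant). I expect that reconciling the "slick" and "computational" viewpoints — i.e. confirming that the direction constraint from $p \in \ell$ is exactly the statement that $\mu$ is parallel to $\phi(L_\infty)$ — is where the real content lies, and I would present the conjugacy argument as the main proof with the direct computation mentioned as verification.
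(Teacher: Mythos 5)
Your argument is correct, but it takes a genuinely different route from the paper's proof of this proposition. The paper proceeds by a direct chain of equivalences: it writes $ g(z) = \frac{a}{c} - \frac{1}{c(cz+d)} $, so that $ |g(z)-p| = |p-\alpha| $ becomes $ \left| \frac{1}{cz+d} + (cp-a) \right| = |c||p-\alpha| $, and then applies the circle identity of Lemma \ref{lem-circle equation} together with the two algebraic facts $ |cp-a|^2 = |c(p-\alpha)|^2 + 1 $ (Lemma \ref{lem-pythagorean rule for complex numbers}) and $ \frac{d}{c} + \frac{\overline{cp-a}}{c} = -p $ (Corollary \ref{cor-intermediate calculation 1}) to land back on $ |z-p| = |p-\alpha| $. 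You instead conjugate $ g $ to a translation and argue geometrically that horocycles become a family of parallel lines which the translation fixes member by member, the direction of the translation vector being pinned down by the $ g $-invariance of $ L_{\infty} $ (Lemma \ref{lem-invariant extended line}) rather than by explicit computation. This is essentially the machinery the paper only develops afterwards, in Section 3 (Lemma \ref{lem-conjugation h between g and translation} and Lemma \ref{lem-image of horocycle under h}), and your reordering is not circular since neither of those results depends on Proposition \ref{prop-horocycle is invariant}. What your approach buys is conceptual economy: it needs only the uniqueness of the fixed point (to force $ \lambda = 1 $) and the invariance of $ L_{\infty} $, and it dispenses with the algebra of Lemmas \ref{lem-equation for horocycle 1}--\ref{lem-circle equation}. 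What the paper's computation buys is that those explicit identities (in particular that $ c(p-\alpha) $ is purely imaginary) are reused later, e.g.\ in the proof of Lemma \ref{lem-image of horocycle under h}. One immaterial slip: with $ \phi(z) = \frac{1}{z-\alpha} $ the translation constant works out to $ \mu = \pm c $ for $ a+d = \pm 2 $, not $ \mp c $; since you only use the direction of $ \mu $, and you derive that correctly from the invariance of $ L_{\infty} $, nothing is affected.
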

The proof of the above proposition requires the following lemmas.
\begin{figure}
    \centering
    \includegraphics[scale=0.85]{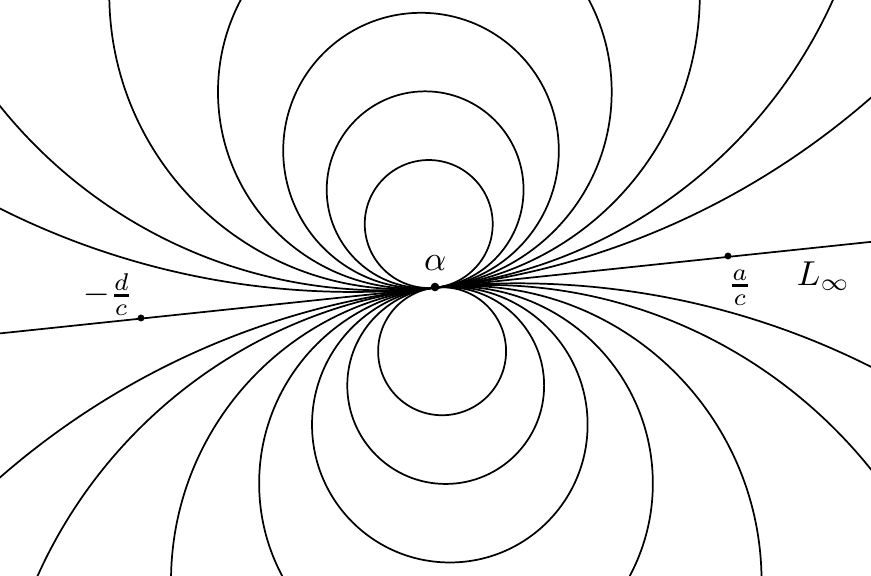}   
    \caption{Horocycles at the fixed point of parabolic M\"obius map}
%    \label{fig:conformal image 1}
\end{figure}
\bigskip
\begin{lem} \label{lem-equation for horocycle 1}
Let $ g(z) = \frac{az + b}{cz + d} $ be the parabolic M\"obius map for $ c \neq 0 $. Then 
$$ \overline{cp -a} = -(cp+d) $$ where $ p \in \ell $ defined in \eqref{eq-centers of horocycles}. 
\end{lem}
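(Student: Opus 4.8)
The plan is to translate the membership $p \in \ell$ into an equality of moduli and then exploit that the two quantities being compared differ by a nonzero \emph{real} constant, which is precisely the information needed to upgrade ``equal modulus'' to ``negative of the complex conjugate.''

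First I would unwind \eqref{eq-centers of horocycles}: the point $p$ lies on $\ell$ exactly when $\bigl| p + \tfrac{d}{c} \bigr| = \bigl| p - \tfrac{a}{c} \bigr|$, and since $c \neq 0$ we may multiply through by $|c|$ to obtain the equivalent statement $|cp + d| = |cp - a|$. Setting $u := cp - a$ and $v := cp + d$, the desired identity $\overline{cp-a} = -(cp+d)$ becomes simply $\bar u = -v$, and we already have $|u| = |v|$ in hand.

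The one place parabolicity enters is the observation that
\[
u - v = (cp - a) - (cp + d) = -(a+d),
\]
which, since $a+d = \pm 2$, is a nonzero \emph{real} number; call it $t$. I would then invoke the elementary fact that if $u,v \in \C$ satisfy $u - v = t$ with $t \in \R \setminus\{0\}$ and $|u| = |v|$, then $\bar u = -v$: expanding $|v+t|^2 = |v|^2$ gives $2t\,\re(v) + t^2 = 0$, hence $\re(v) = -t/2$, so that $v = -\tfrac t2 + i\,\im(v)$ and $u = v + t = \tfrac t2 + i\,\im(v)$, whence $\bar u = \tfrac t2 - i\,\im(v) = -v$. Applying this with the above $u$, $v$, $t$ yields $\overline{cp-a} = -(cp+d)$.

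I do not anticipate a genuine obstacle: the statement is elementary once one notices that the trace $a+d$ is real and nonzero. The only point requiring a little care is keeping the signs straight across the two parabolic cases $a+d = 2$ and $a+d = -2$, but the argument above treats both at once through the single real parameter $t = -(a+d)$, so no case split is actually needed. (Incidentally, the normalization $ad-bc=1$ plays no role in this lemma; only $a+d=\pm 2$ and $p\in\ell$ are used.)
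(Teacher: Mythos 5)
Your proof is correct and follows essentially the same route as the paper's: both translate $p\in\ell$ into $|cp-a|=|cp+d|$, observe that the difference of the two quantities is the nonzero real number $-(a+d)=\mp 2$, and deduce the conjugate relation from there. Your explicit computation $\re(v)=-t/2$ is just a slightly more detailed version of the paper's step that equal moduli plus equal imaginary parts force opposite real parts, and your remark that $ad-bc=1$ is not needed here is accurate.
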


\begin{proof}
The fact that $ p \in \ell $ implies that $ \left| p + \frac{d}{c} \right| = \left| p - \frac{a}{c} \right| $, that is, $ | cp-a | = | cp+d | $. However, the difference of these complex numbers is
$$ cp-a -(cp+d) = -a-d = \pm 2  $$
the non zero real number. Then $ \im\,(cp-a) = \im\,(cp+d) $. However, since $ | cp-a | = | cp+d | $, we obtain that $ \re\,(cp-a) = -\re\,(cp+d) $. Hence, $ \overline{cp -a} = -(cp+d) $. 
\end{proof}

\medskip

\begin{cor} \label{cor-equation for horocycle 01}
Let $ g(z) = \frac{az + b}{cz + d} $ be the parabolic M\"obius map for $ c \neq 0 $. Then $ c(p- \alpha) $ is a purely imaginary number. 
\end{cor}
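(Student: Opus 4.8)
The plan is to derive the claim straight from Lemma~\ref{lem-equation for horocycle 1}, using in addition the formula $\alpha = \frac{a-d}{2c}$ from the first lemma, which gives $c\alpha = \frac{a-d}{2}$. The underlying principle is that a complex number $w$ is purely imaginary precisely when $w - \overline{w}$ recovers $w$ up to the harmless factor $2$, and more usefully that $w - \overline{w} = 2i\,\im w$ is \emph{always} purely imaginary; so it suffices to realize $c(p-\alpha)$, up to a nonzero real multiple, as such a difference $w - \overline{w}$.

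Concretely, I would set $w = cp - a$. Lemma~\ref{lem-equation for horocycle 1} states exactly that $\overline{w} = \overline{cp - a} = -(cp + d)$. Then I would compute
\[
w - \overline{w} = (cp - a) + (cp + d) = 2cp - (a - d) = 2\!\left( cp - \tfrac{a-d}{2} \right) = 2\bigl( cp - c\alpha \bigr) = 2\,c(p-\alpha).
\]
Since $w - \overline{w}$ is purely imaginary for every $w \in \C$, we conclude that $2\,c(p-\alpha)$, and hence $c(p-\alpha)$, is purely imaginary. (Note the complementary combination $w + \overline{w} = (cp-a) - (cp+d) = -(a+d) = \mp 2$ recovers the real-part relation, which is not needed here; the sign of the trace plays no role in the conclusion.)

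An equivalent route, if one prefers to stay closer to the bookkeeping already done in the proof of Lemma~\ref{lem-equation for horocycle 1}, is to use the intermediate identities established there, namely $\im(cp-a) = \im(cp+d)$ and $\re(cp-a) = -\re(cp+d)$: writing $cp - a = x + iy$ forces $cp + d = -x + iy$, and adding the two yields $2cp + (d-a) = 2iy$, i.e.\ $c(p-\alpha) = cp - \frac{a-d}{2} = iy$, which is purely imaginary. Either way there is essentially no obstacle; the only point requiring care is selecting the linear combination of $cp-a$ and $cp+d$ that reproduces $c(p-\alpha)$, and then invoking $c\alpha = \frac{a-d}{2}$ to identify it.
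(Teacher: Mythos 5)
Your proposal is correct and follows essentially the same route as the paper: both identify $c(p-\alpha)$ with $\tfrac{1}{2}\bigl[(cp-a)+(cp+d)\bigr]$ and invoke Lemma~\ref{lem-equation for horocycle 1} to see that this combination is purely imaginary. Your phrasing via $w-\overline{w}$ with $w=cp-a$ is a slightly cleaner way to package the same cancellation of real parts, but it is not a different argument.
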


\begin{proof}
The $ \alpha = \frac{a-d}{2c} $ is the fixed point of $ g $. Thus
$$ c(p- \alpha) = cp -c\alpha = cp - \frac{a-d}{2} = \frac{1}{2} \left[\,\! (cp-a) + (cp+d) \right] . $$
However, Lemma \ref{lem-equation for horocycle 1} implies that $ \re\,(cp-a) = -\re\,(cp+d) $. Then the sum $ cp-a + cp+d $ is a purely imaginary number. Hence, so is $ c(p- \alpha) $. 
\end{proof}
\medskip
\begin{cor} \label{cor-intermediate calculation 1}
Let $ g(z) = \frac{az + b}{cz + d} $ be the parabolic M\"obius map for $ c \neq 0 $. Then the equation 
\begin{align*}
\frac{d}{c} + \frac{\overline{cp-a}}{c} = -p 
\end{align*}
holds. 
\end{cor}

\begin{proof}
Lemma \ref{lem-equation for horocycle 1} implies that 
$$ \frac{d}{c} + \frac{\overline{cp-a}}{c} = \frac{d}{c} - \frac{cp+d}{c} = -p . $$
\end{proof}

\begin{lem} \label{lem-pythagorean rule for complex numbers}
Let $ g(z) = \frac{az + b}{cz + d} $ be the parabolic M\"obius map for $ c \neq 0 $. Let $ \alpha $ be the fixed point of $ g $ and $ p $ is contained in $ \ell $. Then the equation 
\begin{align*}
|cp - a|^2 = | c(p - \alpha)|^2 + 1 
\end{align*}
holds. 
\end{lem}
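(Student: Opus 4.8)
The plan is to reduce everything to the single complex number $u := cp - a$ and track its real and imaginary parts. First I would invoke Lemma \ref{lem-equation for horocycle 1}, which gives $cp + d = -\overline{u}$. On the other hand, trivially $cp + d = (cp - a) + (a + d) = u + (a+d)$. Equating the two expressions for $cp + d$ yields $u + \overline{u} = -(a+d)$, i.e. $2\,\re\,(cp - a) = -(a+d)$. Since $g$ is parabolic, $a + d = \pm 2$, so $\re\,(cp - a) = \mp 1$ and in particular $\bigl(\re\,(cp-a)\bigr)^2 = 1$.

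Next I would compute $c(p - \alpha)$ in terms of $u$. As in the proof of Corollary \ref{cor-equation for horocycle 01}, $c(p - \alpha) = \tfrac12\bigl[(cp - a) + (cp + d)\bigr] = \tfrac12\bigl(u - \overline{u}\bigr) = i\,\im\,(cp - a)$. Hence $|c(p - \alpha)|^2 = \bigl(\im\,(cp-a)\bigr)^2$ (this is just Corollary \ref{cor-equation for horocycle 01} made quantitative).

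Finally I would add the two pieces:
\[
|cp - a|^2 = \bigl(\re\,(cp - a)\bigr)^2 + \bigl(\im\,(cp - a)\bigr)^2 = 1 + |c(p - \alpha)|^2,
\]
using the first paragraph for the real part and the second for the imaginary part. This is exactly the claimed Pythagorean-type identity.

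I do not expect any genuine obstacle here; the statement is essentially bookkeeping built on Lemma \ref{lem-equation for horocycle 1}. The only point requiring a little care is the sign ambiguity in $a + d = \pm 2$, but since the identity only involves $(a+d)^2 = 4$, squaring eliminates it, so both parabolic cases $a+d = 2$ and $a+d = -2$ are handled uniformly.
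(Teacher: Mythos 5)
Your proof is correct and follows essentially the same route as the paper: both rest on the decomposition of $cp-a$ into a real part equal to $-\tfrac{a+d}{2}=\mp 1$ and a purely imaginary part equal to $c(p-\alpha)$, followed by the Pythagorean identity $|cp-a|^2=\bigl(\re\,(cp-a)\bigr)^2+\bigl(\im\,(cp-a)\bigr)^2$. The paper phrases this as $cp-a=(cp-c\alpha)+(c\alpha-a)$ with $c\alpha-a=\pm 1$ and cites Corollary \ref{cor-equation for horocycle 01} for the purely imaginary summand, which is exactly the bookkeeping you carry out directly from Lemma \ref{lem-equation for horocycle 1}.
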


\begin{proof}
The fixed point $ \alpha $ is $ \frac{a-d}{2c} $ and $ a+d = \pm 2 $. Thus
\begin{align} \label{eq- c alpha-a}
c\alpha - a =  \frac{c(a-d)}{2c} - a = - \frac{a+d}{2} = \pm 1
\end{align}
which is the real number. Corollary \ref{cor-equation for horocycle 01} implies that $ c(p-\alpha) $ is the purely imaginary number. Since $ cp-a = cp - c\alpha + c\alpha - a $, the Pythagorean theorem holds
\begin{align*}
|cp - a|^2 = |cp - c\alpha|^2 + |c\alpha - a|^2 \ .
\end{align*}
Hence, the equation \eqref{eq- c alpha-a} implies that 
$ |cp - a|^2 = |cp - c\alpha|^2 + 1 $. 
\end{proof}

\bigskip
\begin{lem}  \label{lem-circle equation}
Let $ A $ and $ B $ be the complex numbers satisfying that $ |A| \neq |B| $. Then $ \left| \frac{1}{z} + A \right| = |B| $ if and only if 
\begin{align*}
\left| z + \frac{\overline{A}}{|A|^2 - |B|^2} \right| = \frac{|B|}{\big| |A|^2 - |B|^2 \big|}
\end{align*}
\end{lem}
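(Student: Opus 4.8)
The plan is to reduce the stated equivalence to a single completion of the square in the variable $z$, taking care that every manipulation is reversible so that the ``if and only if'' is genuine. Write $D = |A|^2 - |B|^2$, which is a nonzero real number by the hypothesis $|A| \neq |B|$. First I would dispose of the point $z = 0$: on the Riemann sphere $\frac{1}{z}$ is then $\infty$, so the left-hand equation fails there, while the right-hand equation at $z = 0$ reads $|A|/|D| = |B|/|D|$, i.e.\ $|A| = |B|$, which is false. Hence $z = 0$ satisfies neither equation, and we may assume $z \neq 0$ throughout, so that multiplication and division by $|z| > 0$ are invertible operations.

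Next, for $z \neq 0$, multiplying $\left| \frac{1}{z} + A \right| = |B|$ by $|z|$ gives the equivalent equation $|1 + Az| = |B|\,|z|$, and squaring (both sides being nonnegative) gives the equivalent equation $|1 + Az|^2 = |B|^2 |z|^2$. Expanding the left-hand side as $(1 + Az)\overline{(1 + Az)} = 1 + Az + \overline{A}\,\overline{z} + |A|^2 |z|^2$ and rearranging, this is equivalent to
\[
D\,|z|^2 + Az + \overline{A}\,\overline{z} + 1 = 0 .
\]
Dividing by $D \neq 0$, it becomes $|z|^2 + D^{-1}\bigl(Az + \overline{A}\,\overline{z}\bigr) + D^{-1} = 0$.

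Finally I would complete the square. Since $D$ is real, $|z|^2 + D^{-1}\bigl(Az + \overline{A}\,\overline{z}\bigr) + |A|^2 D^{-2} = \left| z + \overline{A} D^{-1} \right|^2$, so the last displayed equation is equivalent to
\[
\left| z + \frac{\overline{A}}{D} \right|^2 = \frac{|A|^2}{D^2} - \frac{1}{D} = \frac{|A|^2 - D}{D^2} = \frac{|B|^2}{D^2},
\]
and taking nonnegative square roots yields precisely $\left| z + \frac{\overline{A}}{|A|^2 - |B|^2} \right| = \frac{|B|}{\big| |A|^2 - |B|^2 \big|}$. I do not expect a genuine obstacle here; the only points needing care are the exclusion of $z = 0$ and the verification that clearing $|z|$, squaring, and dividing by $D$ are all equivalences, so that the chain of implications can be read in both directions.
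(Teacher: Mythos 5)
Your proof is correct and follows essentially the same chain of reversible equivalences as the paper: clear $|z|$, square, expand, divide by $|A|^2-|B|^2$, and complete the square using $|A|^2-(|A|^2-|B|^2)=|B|^2$. The only difference is your explicit (and welcome) handling of the point $z=0$, which the paper leaves implicit.
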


\begin{proof}
The following equivalent equation completes the proof
\begin{align*}
& \;\! \left| \frac{1}{z} + A \right| = |B| \\[0.2em]
\Longleftrightarrow & \ | 1+ Az|^2 = |Bz|^2 \\[0.2em]
\Longleftrightarrow & \ 1+ Az + \overline{Az} + |Az|^2 = |Bz|^2 \\[0.2em]
\Longleftrightarrow & \ \big( |A|^2 - |B|^2 \big) |z|^2 + Az + \overline{Az} = -1 \\
\Longleftrightarrow & \ |z|^2 + \frac{Az}{|A|^2 - |B|^2} + \frac{\overline{Az}}{|A|^2 - |B|^2} = - \frac{1}{|A|^2 - |B|^2} \\
\Longleftrightarrow & \ \left| z + \frac{\overline{A}}{|A|^2 - |B|^2} \right|^2 = \frac{|A|^2}{(|A|^2 - |B|^2)^2} - \frac{1}{|A|^2 - |B|^2} \\
\Longleftrightarrow & \ \left| z + \frac{\overline{A}}{|A|^2 - |B|^2} \right| = \frac{|B|}{\big| |A|^2 - |B|^2 \big|} .
\end{align*}
\end{proof}

%\medskip

\begin{proof}[Proof of Proposition \ref{prop-horocycle is invariant}]
Suppose that $ |g(z)-p| = |p-\alpha| $. The following equations are equivalent 
\begin{align}  %\label{eq-image of circle under g}
& \;\! 
\left| \frac{az+b}{cz+d} - p \,\!\right| = |p - \alpha|
 \nonumber \\ 
\Longleftrightarrow & \left| \frac{a}{c} - \frac{1}{c(cz+d)} -p \right| = |p - \alpha| \nonumber \\ 
\Longleftrightarrow & \left| \frac{1}{cz + d} + cp-a \right| = |c||p-\alpha| .\nonumber 
\\
%
%\end{align}
%
\intertext{Lemma \ref{lem-circle equation}, Lemma \ref{lem-pythagorean rule for complex numbers} and Corollary \ref{cor-intermediate calculation 1} implies that the equation as follows 
}
%
%\begin{align}
%
\Longleftrightarrow 
& \left| cz+d + \frac{\overline{cp-a}}{|cp-a|^2 - |c(p-\alpha)|^2} \right| = \frac{|c(p-\alpha)|}{|cp-a|^2 - |c(p-\alpha)|^2} \nonumber \\[0.3em]
\Longleftrightarrow & \,|cz+d + \overline{cp-a}| = |c(p-\alpha)|  \qquad \textrm{by Lemma \ref{lem-pythagorean rule for complex numbers}} \nonumber \\[0.2em]
\Longleftrightarrow & \left| z + \frac{d}{c}+\frac{\overline{cp-a}}{c} \right| = |p-\alpha| \nonumber \\
\Longleftrightarrow & \,|z-p|=|p-\alpha|  \hspace{3.1cm} \textrm{by Corollary \ref{cor-intermediate calculation 1}} \nonumber . 
\end{align}
Then we obtain that the equation $ |g(z)-p| = |p-\alpha| $ is satisfied if and only if the equation $ |z-p| = |p-\alpha| $ is so. Moreover, the extended line $ L_{\infty} $ is invariant under $ g $ by Lemma \ref{lem-invariant extended line}. Hence, any point $ q $ is contained in a horocycle if and only if $ g(q) $ is contained in the same horocycle. 
\end{proof}

\section{Conjugation between parabolic M\"obius map and translation}
The map $ h(z) = \frac{1}{c(z-\alpha)} $ is the conjugation between parabolic M\"obius map and the translation $ z \mapsto z+1 $, that is, $ h \circ g(z) = h(z) +1 $. Let the set $ \{ g(z), g^2(z), \ldots ,g^n(z), \ldots \} $ be the (forward) orbit of $ z $ under $ g $. We show that the forward orbit of any point in $ \C $ under $ g $ converges to the fixed point. Moreover, in this section we show that each point of the orbit, $ g^n(z) $ is arranged with either clockwise or anticlockwise direction along a horocycle as $ n $ increases. 

\bigskip
\begin{lem} \label{lem-conjugation h between g and translation}
Let $ g(z) = \frac{az + b}{cz + d} $ be the parabolic M\"obius map for $ ad-bc=1 $ and $ c \neq 0 $. Let $ h $ be the map, $ h(z) = \frac{1}{c(z-\alpha)} $ where $ \alpha $ is the unique fixed point of $ g $. Then the map $ h \circ g \circ h^{-1} $ is the translation $ z \mapsto z+1 $ where $ a+d=2 $ or $ h \circ g \circ h^{-1} $ is the translation $ z \mapsto z-1 $ where $ a+d=-2 $. 
\end{lem}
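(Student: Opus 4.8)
The plan is to verify the conjugacy by a direct computation, most cleanly by proving the pointwise identity $h(g(z)) = h(z) + \sigma$ on $\C$, where $\sigma := \tfrac{a+d}{2} \in \{1,-1\}$, and then invoking that $h$ is a bijection of $\hat{\C}$. The whole argument rests on two elementary identities that follow from $\alpha = \tfrac{a-d}{2c}$ and $a+d = 2\sigma$, namely
\begin{align*}
a - c\alpha = \frac{a+d}{2} = \sigma, \qquad c\alpha + d = \frac{a+d}{2} = \sigma.
\end{align*}

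First I would establish the normal form $g(z) - \alpha = \dfrac{\sigma(z-\alpha)}{c(z-\alpha)+\sigma}$. Writing $g(z) - \alpha = \dfrac{(a - c\alpha)z + (b - d\alpha)}{cz+d}$, I would use the fixed-point relation $c\alpha^2 = (a-d)\alpha + b$ (from the quadratic in the first lemma) to rewrite $b - d\alpha = c\alpha^2 - a\alpha = \alpha(c\alpha - a) = -\sigma\alpha$, so the numerator collapses to $\sigma z - \sigma\alpha = \sigma(z-\alpha)$; the denominator is $cz+d = c(z-\alpha) + (c\alpha+d) = c(z-\alpha) + \sigma$. Substituting this into $h$ gives
\begin{align*}
h(g(z)) = \frac{1}{c(g(z)-\alpha)} = \frac{c(z-\alpha)+\sigma}{c\sigma(z-\alpha)} = \frac{1}{c(z-\alpha)} + \frac{1}{\sigma} = h(z) + \sigma,
\end{align*}
using $\tfrac{1}{\sigma} = \sigma$. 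Applying this with $z = h^{-1}(w) = \alpha + \tfrac{1}{cw}$ yields $h \circ g \circ h^{-1}(w) = w + \sigma$, which is $w \mapsto w+1$ when $a+d=2$ and $w \mapsto w-1$ when $a+d=-2$.

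Finally I would remark that the exceptional points — $w = 0$ and $w = \infty$, corresponding under $h^{-1}$ to $\alpha$, $\infty$, and the pole $-d/c$ of $g$ — cause no trouble, since $g$, $h$, $h^{-1}$ are all Möbius transformations of $\hat{\C}$, so $h \circ g \circ h^{-1}$ is a Möbius transformation agreeing with the translation $w \mapsto w + \sigma$ on a set with more than two points and hence everywhere on $\hat{\C}$.

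There is no serious obstacle here: the computation is routine once one writes $g(z) - \alpha$ in terms of $z - \alpha$ and exploits that the two combinations $a - c\alpha$ and $c\alpha + d$ both equal $\tfrac{a+d}{2} = \pm 1$. The only point demanding care is sign-tracking in the case $a+d = -2$, where $\sigma = -1$ and the resulting translation is $w \mapsto w - 1$; I would verify this case explicitly to be safe.
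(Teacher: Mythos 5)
Your proof is correct, but it takes a genuinely different route from the paper's. The paper observes that $f = h \circ g \circ h^{-1}$ is itself a M\"obius map, computes the three values $f(\infty)=\infty$, $f(0)=1$, $f(-1)=0$ using the identities $\alpha \pm \tfrac{1}{c} = \tfrac{a}{c}, -\tfrac{d}{c}$, and concludes $f(z)=z+1$ from the fact that a M\"obius transformation is determined by its values at three points (the case $a+d=-2$ is left to the reader). You instead derive the normal form $g(z)-\alpha = \tfrac{\sigma(z-\alpha)}{c(z-\alpha)+\sigma}$ with $\sigma = \tfrac{a+d}{2}$ and verify the functional identity $h(g(z)) = h(z)+\sigma$ pointwise; your algebra checks out (in particular $b - d\alpha = -\sigma\alpha$ via the fixed-point equation, and $\tfrac{1}{\sigma}=\sigma$), and your closing remark about the exceptional points $0,\infty$ is the right way to pass from a computation on $\C$ minus finitely many points to all of $\hat{\C}$. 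Your approach buys a uniform treatment of both signs through $\sigma$ and exhibits the conjugacy as an explicit linearization identity $h\circ g = h + \sigma$ (which is how $h$ is actually used later in the paper, e.g.\ in Proposition \ref{prop-points with order of arguments}); the paper's approach is shorter on computation because it leans on the three-point determination of M\"obius maps, at the cost of leaving one case unverified.
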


\begin{proof}
Recall that any composition of two M\"obius map is also M\"obius map. Thus if a M\"obius map has $ \infty $ as a fixed point, then it is a linear map. Denote $ h \circ g \circ h^{-1} $ by $ f $. The straightforward calculation implies that $ h^{-1}(w) = \alpha+\frac{1}{cw} $. Recall that $ \alpha = \frac{a-d}{2c} $. 
Suppose that $ a+d=2 $ firstly. The following is an intermediate calculations 
\begin{align} \label{eq-intermediate calculation}
\alpha + \frac{1}{c} = \frac{a-d}{2c} + \frac{1}{c} = \frac{a}{c} \ , \quad 
\alpha - \frac{1}{c} = \frac{a-d}{2c} - \frac{1}{c} = -\frac{d}{c} .
\end{align}
Then we have the equations as follows using \eqref{eq-intermediate calculation}
\begin{align*}
 f(\infty) &= h \circ g \circ h^{-1}(\infty) = h \circ g(\alpha) = h(\alpha) = \infty \\
 f(0) &= h \circ g \circ h^{-1}(0) = h \circ g(\infty) = h \left( \frac{a}{c} \right) = h \left( \alpha + \frac{1}{c} \right) = 1 \\
 f(-1) &= h \circ g \circ h^{-1}(-1) = h \circ g \left(\alpha - \frac{1}{c} \right) = h \left( -\frac{d}{c} \right) = h \left( \infty \right) = 0  .
\end{align*}
Since $ f $ is a M\"obius map, the fact that $ f(\infty) = \infty $, $ f(0) = 1 $ and $ f(-1) = 0 $ implies that $ f(z) = z + 1 $. In the case that $ a+d=-2 $, the proof which is similar to the above one is applicable. Thus the remaining proof is left to the readers. 
\end{proof}

\medskip

\begin{lem} \label{lem-image of horocycle under h}
Let $ g(z) = \frac{az + b}{cz + d} $ be the parabolic M\"obius map for $ ad-bc=1 $ and $ c \neq 0 $. Let $ h $ be the map, $ h(z) = \frac{1}{c(z-\alpha)} $. Then $ h $ maps each horocycle at $ \alpha $ of $ g $ to the extended line parallel to the $ x $-axis in $ \C $. 
\end{lem}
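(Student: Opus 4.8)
The plan is to handle the two kinds of horocycle in Definition~\ref{def-horocycle} uniformly, after one preliminary observation: every horocycle at $ \alpha $ passes through $ \alpha $ itself. Indeed $ L_\infty $ contains $ \alpha = \tfrac12\bigl(\tfrac ac + (-\tfrac dc)\bigr) $, and each circle $ S_p = \{ z : |z-p| = |\alpha - p| \} $ trivially contains $ \alpha $. Now $ h(z) = \tfrac{1}{c(z-\alpha)} $ is a M\"obius map (the composition of $ z \mapsto z-\alpha $, $ z \mapsto \tfrac1z $, $ z \mapsto \tfrac zc $), so it carries circles and lines to circles and lines, and $ h(\alpha) = \infty $. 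Hence the image of any horocycle is a circle or line through $ \infty $, that is, a straight line. It remains only to show each of these lines is parallel to the $ x $-axis.

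For that I would invoke the conjugation already set up. By Proposition~\ref{prop-horocycle is invariant} every circle horocycle $ S_p $ is invariant under $ g $, and by Lemma~\ref{lem-invariant extended line} so is $ L_\infty $; therefore, for every horocycle $ C $, the line $ h(C) $ is invariant under $ h \circ g \circ h^{-1} $, which by Lemma~\ref{lem-conjugation h between g and translation} is the translation $ w \mapsto w+1 $ when $ a+d = 2 $ and $ w \mapsto w-1 $ when $ a+d = -2 $. A straight line $ \{ w_0 + sv : s \in \R \} $ that is carried onto itself by a nonzero real translation must have direction vector $ v $ equal to a real multiple of $ 1 $, hence the line is horizontal. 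This already proves the lemma. The only degenerate case is $ p = \alpha $ (note $ \alpha \in \ell $), for which $ S_p = \{\alpha\} $; I would simply exclude it, or record that $ h $ sends it to the point at infinity, consistently with reading the statement on $ \hat{\C} $.

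If one wants to pin down exactly which horizontal line occurs — this is convenient for later sections — one can compute directly. Writing $ w = h(z) $, so $ z = \alpha + \tfrac{1}{cw} $, the condition $ |z - p| = |\alpha - p| $ becomes $ \bigl|\tfrac{1}{cw} - (p-\alpha)\bigr| = |p-\alpha| $, i.e.\ $ \bigl|\tfrac1w - c(p-\alpha)\bigr| = |c(p-\alpha)| $. By Corollary~\ref{cor-equation for horocycle 01}, $ c(p-\alpha) = it $ for some real $ t \neq 0 $, and squaring and simplifying reduces this to $ \im w = -\tfrac{1}{2t} $, a line parallel to the $ x $-axis; as $ p $ runs over $ \ell $ the number $ t $ runs over $ \R \setminus \{0\} $, so these lines exhaust every horizontal line except the $ x $-axis. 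Finally $ h(L_\infty) $ is that remaining line, the $ x $-axis: from \eqref{eq-intermediate calculation} (and its $ a+d = -2 $ analogue) one gets $ h(\tfrac ac) = \pm 1 $ and $ h(-\tfrac dc) = \mp 1 $, two distinct real points, which together with $ h(\alpha) = \infty $ force $ h(L_\infty) = \R \cup \{\infty\} $.

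The only point demanding any care is the passage ``the image is a line, and a line invariant under a nonzero real translation is horizontal''; once Lemma~\ref{lem-conjugation h between g and translation} is in hand this is immediate, so I do not expect a genuine obstacle. The alternative direct computation needs nothing beyond the purely imaginary information supplied by Corollary~\ref{cor-equation for horocycle 01} and some routine algebra with moduli.
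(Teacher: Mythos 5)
Your proposal is correct, and your primary argument is genuinely different from the paper's. The paper proves the lemma by direct computation: it substitutes $ z = \alpha + \frac{1}{cw} $ into $ |z-p| = |\alpha - p| $, expands the squared modulus, and uses Corollary \ref{cor-equation for horocycle 01} (that $ c(p-\alpha) $ is purely imaginary) to reduce the condition to $ \im\, w = \frac{1}{2c(p-\alpha)i} $, a constant — exactly the computation you sketch in your third paragraph as the ``pin down which line'' refinement. Your main route is softer: $ h $ is M\"obius with $ h(\alpha) = \infty $, so the image of a horocycle is a straight line, and since each horocycle is $ g $-invariant (Proposition \ref{prop-horocycle is invariant}, Lemma \ref{lem-invariant extended line}), its image is invariant under the conjugate $ h \circ g \circ h^{-1} = w \mapsto w \pm 1 $ (Lemma \ref{lem-conjugation h between g and translation}), and a line invariant under a nonzero real translation is horizontal. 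There is no circularity, since all three cited results are established independently of this lemma. What each approach buys: your argument is shorter, less computational, and explicitly covers the extended line $ L_\infty $, which the paper's proof treats only implicitly; the paper's computation, on the other hand, produces the explicit value \eqref{eq-image of horocycle under h} of $ \im\, w $, which is needed in the corollary that follows — so if you rely on the soft argument alone you would still have to append your direct calculation to support the later use of that formula.
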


\begin{proof}
Any point in a horocycle at $ \alpha $ of $ g $ satisfies the equation $ |z-p| = | \alpha -p| $ for some $ p \in \ell $. Denote $ h(z) $ by $ w $. Recall that $ h^{-1}(w) = z = \alpha + \frac{1}{cw} $. Thus the following equations are equivalent
\begin{align} \label{eq-number in horocycle}
|z-p| = | \alpha -p| & \Longleftrightarrow \left| \frac{1}{cw}+\alpha-p \right| = | \alpha -p| \nonumber \\
 & \Longleftrightarrow \left| \frac{1}{cw}+\alpha-p \right|^2 = | \alpha -p|^2  \nonumber \\
 & \Longleftrightarrow \frac{1}{|cw|^2} + \frac{\overline{\alpha-p}}{cw} + \frac{\alpha-p}{\overline{cw}} + | \alpha -p|^2 = | \alpha -p|^2  \nonumber \\
 & \Longleftrightarrow \frac{1}{|cw|^2} \left( 1+ \overline{(\alpha-p)cw} + (\alpha-p)cw \right) = 0  \nonumber \\[0.2em]
 & \Longleftrightarrow \overline{(p-\alpha)cw} + (p-\alpha)cw = 1 .
\end{align}
Corollary \ref{cor-intermediate calculation 1} implies that $ c(p-\alpha) $ is a purely imaginary number. Thus the equation 
\begin{align*}
\overline{c(p-\alpha)} = -c(p-\alpha)
\end{align*}
holds. The equation \eqref{eq-number in horocycle} implies that 
\begin{align*}
1 = \overline{c(p-\alpha)}\overline{w} + c(p-\alpha)w = c(p-\alpha) \big[\,\! -\overline{w} + w \big] = c(p-\alpha) (2 \,\im\, w )\,i 
\end{align*}
Hence, the imaginary part of $ w $ is constant as follows 
\begin{align} \label{eq-image of horocycle under h}
\im\, w = \frac{1}{2c(p-\alpha)i} .
\end{align}
It completes the proof.  
\end{proof}
\smallskip
\begin{cor}
The point $ z $ satisfies that $ \left| z + \frac{d}{c} \right| = \left| z - \frac{a}{c} \right| $ if and only if the real part of $ h(z) $ is zero, that is, $ \re\, h(z) = 0 $. 
\end{cor}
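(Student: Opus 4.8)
The defining relation $\left|z+\frac{d}{c}\right| = \left|z-\frac{a}{c}\right|$ says, after multiplying through by $|c|$, exactly that $z$ belongs to the line $\ell$ of \eqref{eq-centers of horocycles}. So the plan is to prove the two equivalences
\[
  z \in \ell \iff c(z-\alpha)\ \text{is purely imaginary} \iff \re\, h(z) = 0 ,
\]
neither of which requires anything beyond the elementary identities already exploited in Section~2.

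For the first equivalence I would re-run the computations of Lemma~\ref{lem-equation for horocycle 1} and Corollary~\ref{cor-equation for horocycle 01} with a general point $z$ in place of $p\in\ell$. Put $u = cz-a$ and $v = cz+d$. Then $u-v = -(a+d)$, a nonzero real number since $a+d=\pm 2$, so $\im u = \im v$; hence $|u| = |v|$ holds if and only if $(\re u)^2 = (\re v)^2$, and since $\re u = \re v$ would force the impossibility $u=v$, this is equivalent to $\re u = -\re v$, i.e.\ to $u+v$ being purely imaginary. As $\alpha = \frac{a-d}{2c}$, one has $u+v = 2cz-(a-d) = 2c(z-\alpha)$, so $z\in\ell$ if and only if $c(z-\alpha)$ is purely imaginary.

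For the second equivalence, recall $h(z) = \frac{1}{c(z-\alpha)}$. For a nonzero $\zeta$ one has $\re(1/\zeta) = \re(\overline{\zeta})/|\zeta|^2 = \re(\zeta)/|\zeta|^2$, which vanishes precisely when $\re\zeta=0$; taking $\zeta = c(z-\alpha)$ gives $\re\, h(z)=0$ if and only if $c(z-\alpha)$ is purely imaginary. Chaining the two equivalences completes the argument. The only point deserving a word of care is the value $z=\alpha$, where $h(z)=\infty$: since $\alpha = \frac{1}{2}\left(\frac{a}{c}-\frac{d}{c}\right)$ lies on $\ell$, the statement should be read either for $z\neq\alpha$ or with the convention that $h$ carries the extended line $\ell$ onto the imaginary axis together with $\infty$. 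I do not expect a genuine obstacle here; the thing easiest to overlook is that the ``only if'' direction is not literally contained in Corollary~\ref{cor-equation for horocycle 01} and must be checked, which the displayed chain of equivalences does.

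A purely geometric alternative, which I would relegate to a remark, is the following: by Lemma~\ref{lem-image of horocycle under h} the map $h$ sends the horocycle $L_\infty$ to a horizontal line, in fact to the $x$-axis since $h(\infty)=0$, while $\ell$ meets $L_\infty$ orthogonally at $\alpha$ and passes through $\infty$; by conformality of the M\"obius map $h$, the image $h(\ell)$ is then a line through $h(\infty)=0$ orthogonal to the $x$-axis, i.e.\ the imaginary axis. Hence $z\in\ell$ if and only if $h(z)$ lies on the imaginary axis. I would present the computational proof as the main one, since it avoids any appeal to behaviour at $\infty$, and keep the geometric picture only as an illustrative remark.
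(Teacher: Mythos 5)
Your proof is correct, but it takes a genuinely different and more direct route than the paper. The paper's proof leans on the horocycle machinery: it places $z$ on a horocycle $S_p$, uses the formula $\im\,w = \frac{1}{2c(p-\alpha)i}$ from Lemma \ref{lem-image of horocycle under h} to solve $\re\,h(z)=0$ explicitly as $z = 2(p-\alpha)+\alpha$, verifies $\left|z+\frac{d}{c}\right| = \left|z-\frac{a}{c}\right|$ for that point using the facts that $c\alpha+d$ and $c\alpha-a$ are opposite real numbers and $c(p-\alpha)$ is purely imaginary, and then recovers the converse by a counting argument (each horocycle meets $\ell$ in exactly two points, $\alpha$ and the point just computed, so nothing else on the horocycle can lie on $\ell$). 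Your argument instead chains two clean equivalences: $z\in\ell \iff c(z-\alpha)$ purely imaginary, obtained by re-running Lemma \ref{lem-equation for horocycle 1} and Corollary \ref{cor-equation for horocycle 01} with a general $z$ in place of $p$ (and correctly noting that the ``only if'' half is not literally in Corollary \ref{cor-equation for horocycle 01} and needs the observation that $\re u = \re v$ is excluded); and $\re(1/\zeta)=0 \iff \re\zeta = 0$ via $\re(1/\zeta) = \re(\zeta)/|\zeta|^2$. This buys you a shorter, symmetric treatment of both directions, avoids any dependence on Lemma \ref{lem-image of horocycle under h} or the foliation by horocycles, and explicitly flags the edge case $z=\alpha$, $h(\alpha)=\infty$, which the paper's statement and proof pass over silently. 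What the paper's route buys in exchange is consistency with the surrounding development (it reuses \eqref{eq-image of horocycle under h} and reinforces the geometric picture of $h$ straightening the horocycle foliation), and your conformality remark is essentially that picture made explicit. Both proofs are valid.
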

\begin{proof}
Denote $ h(z) $ by $ w $. Observe that for any $ z \in \C $ there exists a horocycle at $ \alpha $ which contains $ z $. Thus we may assume that $ \im\,w = \frac{1}{2c(p-\alpha)i} $ by the equation \eqref{eq-image of horocycle under h} in Lemma \ref{lem-image of horocycle under h}. Suppose that $ w = i\,\im\,w $. Then since $ h(z) = \frac{1}{c(z-\alpha)} $, the following equation holds for $ z $  
\begin{align*}
\frac{1}{c(z-\alpha)} = \frac{1}{2c(p-\alpha)} \,.
\end{align*}
Thus we obtain that $ z = 2(p-\alpha) + \alpha $. Recall that $ \alpha = \frac{a-d}{2c} $ and $ a+d = \pm 2 $. Thus 
\begin{align*}
c\alpha + d &= c \,\frac{a-d}{2c} + d = \frac{a+d}{2} = 1 \ \ \textrm{or}  \ -1 \\[0.3em]
c\alpha - a &= c \,\frac{a-d}{2c} - a = -\frac{a+d}{2} = -1 \ \ \textrm{or} \ \ 1 .
\end{align*}
Then both $ c\alpha + d $ and $ c\alpha - a $ are real numbers and the sum of these two numbers is zero. Recall also that Corollary \ref{cor-equation for horocycle 01} implies that $ c(p-\alpha) $ is a purely imaginary number. Thus for any real number $ r $, distance between $ c(p-\alpha) $ and $ r $ is the same as the distance between $ c(p-\alpha) $ and $ -r $. Then we obtain that 
\begin{align*}
\left| z + \frac{d}{c} \right| &= \left| 2(p-\alpha) + \alpha + \frac{d}{c} \right| \\
&= \frac{1}{|c|} \left| 2c(p-\alpha) + c\alpha + d \right| \\
&= \frac{1}{|c|} \left| 2c(p-\alpha) + c\alpha - a \right| \\
&= \left| 2(p-\alpha) + \alpha - \frac{a}{c} \right| \\
&= \left| z - \frac{a}{c} \right|  .
\end{align*}
The map $ h $ is the bijection between each horocycle and the corresponding extended line in Lemma \ref{lem-image of horocycle under h}. The line $ \ell $ defined in \eqref{eq-centers of horocycles}
\begin{align*} 
\ell = \left\{ z \colon \left| z + \frac{d}{c} \right| = \left| z - \frac{a}{c} \right| \right\} 
\end{align*}
is the straight line which goes through $ \alpha $ and the center of every horocycles. Thus the intersection of a single horocycle and $ \ell $ is the set of two points, one of which is $ \alpha $. The other point is the unique point $ z_0 $ satisfying $ h(z_0) = \frac{1}{2c(p-\alpha)} $ which is determined by the equation $ |p-\alpha| = |z_0 - p| $ where $ p \in \ell $. Since $ c(p-\alpha) $ is a purely imaginary number, so is $ h(z_0) $. This completes the proof. 
\end{proof}
Let $ \overbow{qs} $ be the circular arc of which end points are $ q $ and $ s $. The points $ q $ and $ s $ are called the end points of $ \overbow{qs} $. 
\begin{enumerate}[label=\text{(3.\alph*)}]
\item We consider only circular arc in $ \C $ which is homeomorphic to the closed interval.
\item The definition of circular arc is extended to the closed interval, which is a subset of the extended line. 
\item The embedded homeomorphic image of the circular arc in $ \C $ is called {\em arc} and the notation of arc is the same as that of circular arc. 
\item \label{item-condition for circular arc} The arc $ \overbow{z_0 z_1 z_2 \ldots z_{n-1} z_n} $ is defined as the arc $ \overbow{z_0 z_n} $ which contains the points $ z_1, z_2, \ldots , z_{n-1} $ where $ \overbow{z_i z_{i+1}} $ is disjoint from $ \overbow{z_j z_{j+1}} $ only if $ 0 < i+1 < j < n $ for $ n \geq 4 $. 
\end{enumerate}
\bigskip  %\bigskip
\begin{lem}  \label{lem-homeomorphic image of arc}
Let $ h $ be a homeomorphism on the Riemann sphere. Let the points $ z_0,z_1, \ldots , z_n $ in $ \C $ be in the single circular arc $ \overbow{z_0 z_n} $ and $ w_j $ be $ h(z_j) $ in $ \C $ for $ 0 \leq j \leq n $. Suppose that the arc $ \overbow{z_0 z_1 z_2 \ldots z_{n-1} z_n} $ satisfies the condition \ref{item-condition for circular arc}. Then $ h(\overbow{z_0 z_1 z_2 \ldots z_{n-1} z_n}) $ is either $ \overbow{w_0 w_1 w_2 \ldots w_{n-1} w_n} $ or $ \overbow{w_n w_{n-1} \ldots w_{2} w_1 w_0} $. 
\end{lem}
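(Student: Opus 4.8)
The plan is to treat the situation topologically: being the homeomorphic image of a closed interval, and the order (up to reversal) in which finitely many marked points sit along it, are invariants that any homeomorphism must preserve. By the definition of \emph{arc} in (3.a)--(3.c), $\overbow{z_0 z_1 \ldots z_n}$ is a homeomorphic image of $[0,1]$, so $h(\overbow{z_0 z_1 \ldots z_n})$ is again such an arc, with endpoints $h(z_0) = w_0$ and $h(z_n) = w_n$. Hence the only substantive point is that the remaining marked points $w_1, \ldots, w_{n-1}$ occur along this image arc in the consecutive order of condition \ref{item-condition for circular arc} (possibly after reversing the direction of traversal); that is exactly what the notations $\overbow{w_0 w_1 \ldots w_n}$ and $\overbow{w_n \ldots w_1 w_0}$ encode, and as sets these two coincide, each being the arc from $w_0$ to $w_n$ that passes through all of the $w_j$.

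I would make this precise by transporting everything through a parametrization. Choose a homeomorphism $\psi \colon [0,1] \to \overbow{z_0 z_1 \ldots z_n}$; since the endpoints of this arc are $z_0$ and $z_n$, after possibly replacing $\psi(t)$ by $\psi(1-t)$ we may take $\psi(0) = z_0$ and $\psi(1) = z_n$, and we set $t_j = \psi^{-1}(z_j)$. Condition \ref{item-condition for circular arc} says that $\overbow{z_0 z_1 \ldots z_n}$ is the union of the sub-arcs $\overbow{z_i z_{i+1}}$ for $0 \le i \le n-1$, with consecutive sub-arcs meeting only at the shared marked point and non-consecutive ones disjoint. Each $\overbow{z_i z_{i+1}}$ is a compact connected subset of $\overbow{z_0 z_1 \ldots z_n}$, so $\psi^{-1}(\overbow{z_i z_{i+1}})$ is a closed subinterval of $[0,1]$ containing $t_i$ and $t_{i+1}$; these $n$ subintervals cover $[0,1]$, meet pairwise only in an endpoint when consecutive, and are disjoint otherwise. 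An elementary argument about subintervals of a line then forces them to tile $[0,1]$ in their natural order, so $0 = t_0 < t_1 < \cdots < t_n = 1$. Finally, $h \circ \psi \colon [0,1] \to h(\overbow{z_0 z_1 \ldots z_n})$ is a homeomorphism with $(h \circ \psi)(t_j) = w_j$; because $h$ is injective it sends each $\overbow{z_i z_{i+1}}$ onto a sub-arc of $h(\overbow{z_0 z_1 \ldots z_n})$ with endpoints $w_i, w_{i+1}$ and preserves all the intersection and disjointness relations among them. Thus $h(\overbow{z_0 z_1 \ldots z_n})$, subdivided by $w_1, \ldots, w_{n-1}$, again satisfies condition \ref{item-condition for circular arc}, i.e.\ it equals $\overbow{w_0 w_1 \ldots w_n}$; had we started from the parametrization with $\psi(0) = z_n$ we would have obtained $\overbow{w_n \ldots w_1 w_0}$ instead, which is the stated alternative.

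For the small cases $n = 1, 2, 3$, where condition \ref{item-condition for circular arc} is not imposed, I would argue directly: for $n = 1$ only the endpoints need to be matched, and for $n = 2, 3$ the defining property of $\overbow{z_0 z_1 \ldots z_n}$ is that $z_j$ (for $0 < j < n$) cuts the arc into two pieces separating $z_i$ from $z_k$ whenever $i < j < k$, which is a topological property transported verbatim by $h$. I expect the only real work to be the point-set step in the middle paragraph: confirming that a compact connected subset of an arc becomes a closed subinterval under any parametrization, and that the disjointness pattern in condition \ref{item-condition for circular arc} is equivalent to the monotone ordering $t_0 < t_1 < \cdots < t_n$ of the parameters. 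Everything surrounding that is a formal consequence of $h$ being a homeomorphism, so it may be cleanest to isolate the ordering fact as a short lemma about subintervals of $[0,1]$ and then apply $\psi$ and then $h$ to it.
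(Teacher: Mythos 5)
Your proof is correct, and it reaches the conclusion by a genuinely different route than the paper, although both arguments ultimately rest on the same invariant: a homeomorphism preserves the pattern of intersections and disjointnesses among the sub-arcs $\overbow{z_i z_{i+1}}$, and that pattern determines the consecutive order of the marked points up to reversal. The paper argues by contradiction on a single swapped pair: if $h(\overbow{z_0 z_i z_j z_n})$ were $\overbow{w_0 w_j w_i w_n}$ with $0<i<j<n$, then the disjoint sub-arcs $\overbow{z_0 z_i}$ and $\overbow{z_j z_n}$ would have intersecting images, contradicting injectivity; the case of few points is handled separately by noting that removing an interior point disconnects an arc while removing an endpoint does not. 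You instead argue directly: pull everything back to $[0,1]$ by a parametrization $\psi$, use condition \ref{item-condition for circular arc} to force the parameters $t_j=\psi^{-1}(z_j)$ into monotone order via an elementary tiling lemma for closed subintervals, and then push forward through $h\circ\psi$. Your version is longer but more self-contained and more careful about what the notation $\overbow{z_0 z_1 \ldots z_n}$ actually pins down (in particular it makes explicit that the two alternatives in the conclusion are the same set read in opposite directions, and that the real content is the exclusion of nontrivial permutations of the interior points); the paper's version is shorter but leaves the reduction from ``no transposition of any pair'' to ``the full order is preserved or reversed'' implicit, and its small-case argument is stated somewhat confusingly. The one place where you should be slightly careful is the reading of condition \ref{item-condition for circular arc}: as literally written it is an ``only if'' clause, and your argument uses both that non-consecutive sub-arcs are disjoint and that consecutive ones meet only in the shared marked point; this is clearly the intended meaning (and is what the paper's own proof also uses), but it is worth flagging that you are supplying that interpretation.
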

\begin{proof}
Suppose that $ h(\overbow{z_0 z_i z_{j} z_n}) $ is $ \overbow{w_0 w_j w_i w_n} $ where $ 0<i<j<n $ for $ n \geq 4 $. Then by the condition of the arc, $ \overbow{z_0 z_i} $ is disjoint from the arc $ \overbow{z_{j} z_n} $. However, the intersection, $ \overbow{w_0 w_i} \cap  \overbow{w_j w_n} = \overbow{w_j w_i} $ is not empty. It contradicts that $ h $ is the homeomorphism. For the case $ n=3 $, suppose that $ \overbow{z_0 z_1 z_2} $ is mapped  to $ \overbow{w_0 w_1 w_2} $ or $ \overbow{w_2 w_1 w_0} $, that is, $ w_2 $ is an end point of $ h(\overbow{z_0 z_1 z_2}) $. However, $ \overbow{z_0 z_1 z_2} \setminus \{z_2 \} $ is disconnected but $ \overbow{w_0 w_1 w_2} \setminus \{w_2 \} $ or $ \overbow{w_2 w_1 w_0} \setminus \{w_2 \} $ is connected. It contradicts that $ h $ is the homeomorphism. 
\end{proof}
\medskip 
Let $ S_p $ be a horocycle which is contained in $ \C $ as follows 
\begin{align} \label{eq-a horocycle Sp}
S_p = \{ z \colon |z-p| = | \alpha -p| \,\}
\end{align}
where $ p $ satisfies that $ \left| p + \frac{d}{c} \right| = \left| p - \frac{a}{c} \right| $. Let the principal argument of $ \alpha - p $ be the argument between $ -\pi $ to $ \pi $, that is, $ -\pi < \mathrm{Arg}(\alpha - p) \leq \pi $. 
\begin{enumerate}[resume, label=\text{(3.\alph*)}]
\item \label{item-principal argument in horocycle} Let $ z_0,z_1, \ldots , z_n $ be the points contained in $ S_p \setminus \{ \alpha \} $. Let $ \theta_j $ be the argument of $ z_j - p $ where $ \mathrm{Arg}(\alpha - p) < \theta_j < \mathrm{Arg}(\alpha - p) + 2\pi $ for every $ 1 \leq j \leq n $. 
\end{enumerate}
Thus the arc $ \overbow{z_1 z_2 \ldots z_{n-1} z_n} $ satisfies that the condition \ref{item-condition for circular arc} if and only if the arguments of $ z_j $ for $ 1 \leq j \leq n $ satisfies that $ \theta_1 < \theta_2 < \cdots < \theta_n $ or $ \theta_n < \theta_{n-1} < \cdots < \theta_1 $. Similarly, if $ w_1,w_2,\ldots,w_n $ are contained in the line parallel to real axis, that is, $ w_j \in \{ w \colon \,\im\,w = \textrm{const.} \} $ for all $ 1 \leq j \leq n $, then the arc $ \overbow{w_1 w_2 \ldots w_{n-1} w_n} $ satisfies the condition \ref{item-condition for circular arc} if and only if the real part of $ w_j $ for $ 1 \leq j \leq n $ satisfies that $ \re\,w_1 < \re\,w_2 < \cdots < \re\,w_n $ or $ \re\,w_n < \re\,w_{n-1} < \cdots < \re\,w_1 $. Then the above statements and Lemma \ref{lem-homeomorphic image of arc} implies the following lemma. 
%
%\bigskip
%
%
\begin{figure}
    \centering
    \includegraphics[scale=0.90]{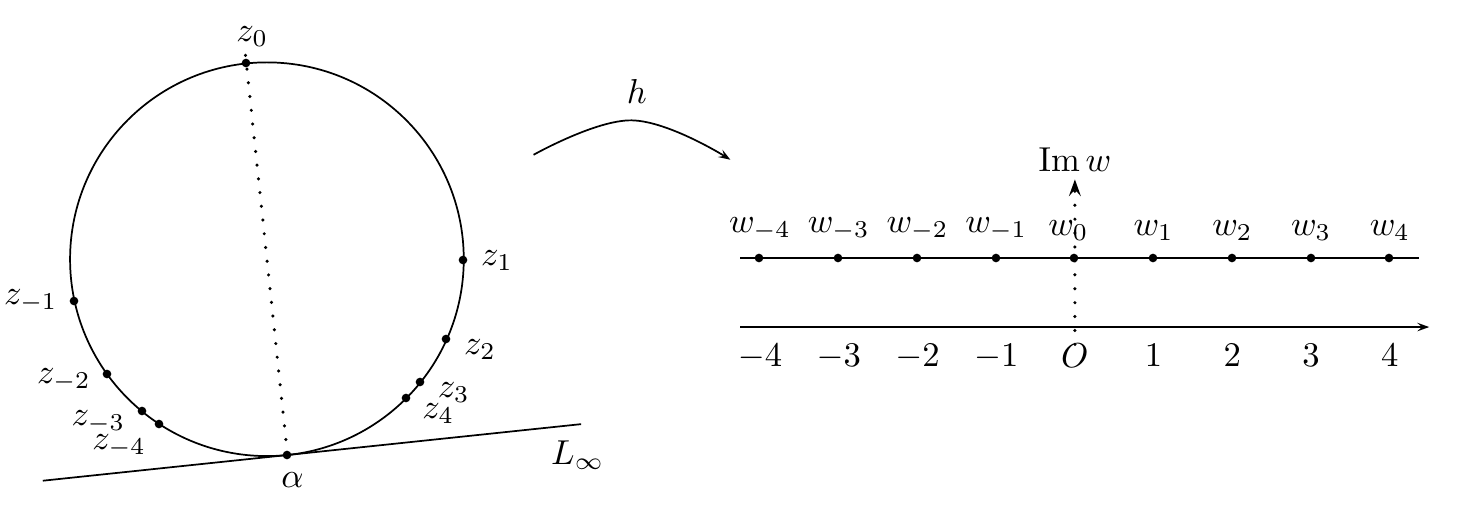}   
    \caption{Image of horocycle under $ h $}
%    \label{fig:conformal image 1}
\end{figure}
\bigskip
\begin{lem} \label{lem-order of the argument}
Let $ z_0,z_1, \ldots , z_n $ be the points contained in $ S_p \setminus \{ \alpha \} $ where $ S_p $ is the horocycle at $ \alpha $ in \eqref{eq-a horocycle Sp}. %Let the principal argument of $ \alpha - p $ be the argument between $ -\pi $ to $ \pi $, that is, $ -\pi < \mathrm{Arg}(\alpha - p) \leq \pi $. 
Let $ \theta_j $ be the argument of $ z_j - p $ where $ \mathrm{Arg}(\alpha - p) < \theta_j < \mathrm{Arg}(\alpha - p) + 2\pi $ for every $ 1 \leq j \leq n $. Let $ w_1,w_2,\ldots,w_n $ be points in the set $ h(S_p \setminus \{ \alpha \}) $ where $ h(z) = \frac{1}{c(z-\alpha)} $. Then $ \theta_1 < \theta_2 < \cdots < \theta_n $ or\ $ \theta_n < \theta_{n-1} < \cdots < \theta_1 $ if and only if \ $ \re\,w_1 < \re\,w_2 < \cdots < \re\,w_n $. 
\end{lem}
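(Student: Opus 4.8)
The plan is to read Lemma~\ref{lem-order of the argument} off from the topological Lemma~\ref{lem-homeomorphic image of arc}, using the two characterizations stated just before the lemma: for points $z_1,\dots,z_n$ on the horocycle $S_p$ the arc $\overbow{z_1 z_2 \ldots z_n}$ satisfies condition~\ref{item-condition for circular arc} exactly when the $\theta_j$ are monotone, and for points $w_1,\dots,w_n$ on a horizontal line the arc $\overbow{w_1 w_2 \ldots w_n}$ satisfies~\ref{item-condition for circular arc} exactly when the $\re w_j$ are monotone. The only facts needed beyond these are that both $h(z)=\frac{1}{c(z-\alpha)}$ and $h^{-1}(w)=\alpha+\frac1{cw}$ are M\"obius maps, hence homeomorphisms of $\hat{\C}$, and that by Lemma~\ref{lem-image of horocycle under h} the map $h$ sends $S_p\setminus\{\alpha\}$ onto a line parallel to the real axis, so the $w_j=h(z_j)$ do lie on such a line.

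First I would dispose of the trivial cases: if $z_i=z_j$ for some $i\neq j$ then $\theta_i=\theta_j$, $w_i=w_j$ and $\re w_i=\re w_j$, so neither side of the claimed equivalence holds; thus we may take the $z_j$ pairwise distinct, and for $n\le 2$ there is nothing to prove. For $n\ge 3$, assume $\theta_1<\dots<\theta_n$ (the decreasing case is identical after reversing the labelling). Then $\overbow{z_1 z_2 \ldots z_n}$ is an honest sub-arc of $S_p\setminus\{\alpha\}$ satisfying~\ref{item-condition for circular arc}, so Lemma~\ref{lem-homeomorphic image of arc} applied to the homeomorphism $h$ gives that $h(\overbow{z_1 z_2 \ldots z_n})$ is $\overbow{w_1 w_2 \ldots w_n}$ or $\overbow{w_n w_{n-1} \ldots w_1}$; either way it is an arc satisfying~\ref{item-condition for circular arc} with vertices on a horizontal line, hence $\re w_1<\dots<\re w_n$ or $\re w_n<\dots<\re w_1$. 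Conversely, if the $\re w_j$ are monotone then $\overbow{w_1 w_2 \ldots w_n}$ (or its reverse) satisfies~\ref{item-condition for circular arc}; applying Lemma~\ref{lem-homeomorphic image of arc} to $h^{-1}$, which carries the horizontal line back onto $S_p\setminus\{\alpha\}$, shows $\overbow{z_1 z_2 \ldots z_n}$ (up to reversal) satisfies~\ref{item-condition for circular arc}, so the $\theta_j$ are monotone. This is the asserted equivalence; the one-sided phrasing of the conclusion is the harmless observation that, after relabelling $z_1,\dots,z_n$, one may always take the image real parts increasing.

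The point requiring the most care is the bookkeeping inside Lemma~\ref{lem-homeomorphic image of arc}: its proof argues $n\ge 4$ and $n=3$ separately and is vacuous for $n\le 2$, so the reduction above must explicitly note that small $n$ carries no content. It is worth recording that the ``or its reverse'' in Lemma~\ref{lem-homeomorphic image of arc} is exactly the reason no fixed orientation linking the order of the $\theta_j$ to the order of the $\re w_j$ is claimed; a direct computation in fact pins this down. Writing $z_j-p=|\alpha-p|\,e^{i\theta_j}$ and using Corollary~\ref{cor-equation for horocycle 01} (which ties $\arg c$ to $\arg(\alpha-p)$) one gets $\re w_j=\pm\frac{1}{2|c|\,|\alpha-p|}\cot\frac{\theta_j-\mathrm{Arg}(\alpha-p)}{2}$, a strictly monotone function of $\theta_j$ on $\bigl(\mathrm{Arg}(\alpha-p),\,\mathrm{Arg}(\alpha-p)+2\pi\bigr)$ whose sign depends only on which side of $\alpha$ along $\ell$ the centre $p$ lies. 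This sharper route gives an alternative proof but is not needed for the statement as written.
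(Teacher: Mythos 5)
Your proof follows essentially the same route as the paper's: both deduce the lemma from Lemma~\ref{lem-homeomorphic image of arc} together with Lemma~\ref{lem-image of horocycle under h} and the characterizations of condition~\ref{item-condition for circular arc} by monotonicity of the $\theta_j$ and of the $\re\,w_j$. You simply supply more detail (the degenerate cases $n\le 2$, the orientation bookkeeping, and the optional explicit cotangent formula) than the paper's two-sentence proof does.
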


\begin{proof}
The $ h $ maps horocycle at $ \alpha $ of $ g $ to the extended line parallel to the $ x $-axis in $ \hat{\C} $ by Lemma \ref{lem-image of horocycle under h}. Moreover, Lemma \ref{lem-homeomorphic image of arc} and the relation between arc and the argument of $ z_j $ for $ 1 \leq j \leq n $ completes the proof. 
\end{proof}

\medskip
\begin{prop} \label{prop-points with order of arguments}
Let $ S_p $ be the horocycle at $ \alpha $ of $ g $ where $ g(z) = \frac{az+b}{cz+d} $ with $ ad-bc=1 $ and $ c \neq 0 $. For a point $ z_0 \in S_p \setminus \{ \alpha \} $, denote $ g^n(z_0) $ by $ z_n $ for $ n \in \Z $. Then 
$$ \lim_{n \rightarrow \pm \infty} z_n = \alpha . $$
Moreover, either $ \mathrm{Arg}(\alpha - p) < \theta_i < \theta_j < \mathrm{Arg}(\alpha - p) + 2\pi $ where $ i < j $ or $ \mathrm{Arg}(\alpha - p) < \theta_i < \theta_j < \mathrm{Arg}(\alpha - p) + 2\pi $ where $ i > j $ for $ i, j \in \Z $. 
\end{prop}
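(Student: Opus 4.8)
The plan is to transport everything through the conjugacy $h(z) = \frac{1}{c(z-\alpha)}$, under which $g$ becomes a unit translation. Put $w_n = h(z_n)$. By Lemma~\ref{lem-conjugation h between g and translation}, $w_{n+1} = w_n + 1$ if $a+d = 2$ and $w_{n+1} = w_n - 1$ if $a+d = -2$; hence $w_n = w_0 \pm n$ for all $n \in \Z$, so $\re\,w_n = \re\,w_0 \pm n$ is strictly monotone in $n$ and $|w_n| \to \infty$ as $n \to \pm\infty$. Since $z_n = h^{-1}(w_n) = \alpha + \frac{1}{c\,w_n}$ (the formula for $h^{-1}$ computed in the proof of Lemma~\ref{lem-conjugation h between g and translation}), we get $|z_n - \alpha| = \bigl(|c|\,|w_n|\bigr)^{-1} \to 0$, which proves $\lim_{n\to\pm\infty} z_n = \alpha$.

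For the statement on arguments, observe first that $S_p$ is invariant under $g$ by Proposition~\ref{prop-horocycle is invariant}, so $z_n \in S_p$ for every $n$; also $z_n \neq \alpha$ because $\alpha$ is the only fixed point and $z_0 \neq \alpha$, and the points $z_n$ are pairwise distinct because the numbers $\re\,w_n$ are. Thus $\theta_n = \mathrm{Arg}(z_n - p)$ with the normalization $\mathrm{Arg}(\alpha - p) < \theta_n < \mathrm{Arg}(\alpha - p) + 2\pi$ is well defined for every $n$, the $w_n$ all lie on the horizontal line $h(S_p \setminus \{\alpha\})$ of Lemma~\ref{lem-image of horocycle under h}, and they are ordered along that line by their real parts. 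Fix integers $m < M$. If $a+d = 2$, then $\re\,w_m < \re\,w_{m+1} < \cdots < \re\,w_M$, and Lemma~\ref{lem-order of the argument} applied to $z_m, z_{m+1}, \ldots, z_M$ forces $\theta_m, \theta_{m+1}, \ldots, \theta_M$ to be strictly monotone; if $a+d = -2$, the same holds after reversing the list of indices so that the real parts increase.

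Finally I would check that the \emph{direction} of monotonicity is the same for every block $[m,M]$: given two index pairs $i<j$ and $i'<j'$, apply the previous step to a single block of consecutive integers containing all four, and note that a strictly monotone finite sequence cannot contain both an increasing and a decreasing pair, so $\theta_j - \theta_i$ and $\theta_{j'} - \theta_{i'}$ have the same sign. Therefore exactly one of the two alternatives holds — $\theta_i < \theta_j$ for all $i<j$, or $\theta_i > \theta_j$ for all $i<j$ — which is the asserted dichotomy. I expect the only slightly delicate point to be this orientation‑consistency argument, together with correctly reversing the index order in the case $a+d=-2$ so that Lemma~\ref{lem-order of the argument} applies as stated; the rest is a direct computation through $h$.
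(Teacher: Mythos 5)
Your proposal is correct and follows essentially the same route as the paper: conjugate by $h(z)=\frac{1}{c(z-\alpha)}$ to the unit translation, use the invariance of $S_p$ (Proposition \ref{prop-horocycle is invariant}) together with Lemma \ref{lem-order of the argument} to order the arguments, and deduce the limit from $|w_n|\to\infty$. The only differences are cosmetic refinements — you compute $|z_n-\alpha|=(|c|\,|w_n|)^{-1}$ directly instead of invoking continuity of $h$ on the sphere, and you spell out the $a+d=-2$ sign and the orientation-consistency across finite blocks, which the paper leaves implicit.
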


\begin{proof}
Lemma \ref{lem-conjugation h between g and translation} implies that $ h \circ g \circ h^{-1}(w) = w +1 $ where $ h(z) = \frac{1}{c(z-\alpha)} $. Denote the translation $ w \mapsto w + 1 $ by $ f $. Let $ w_j $ be $ h(z_j) $ for every $ j \in \Z $. Since every point $ z_j $ is in a single horocycle for all $ j \in \Z $ by Proposition \ref{prop-horocycle is invariant}, every $ w_j $ is also in a single straight line $ \{ w \colon \im\,w = \textrm{const.} \} $ for all $ j \in \Z $. Moreover, by the conjugation $ h $ we have that 
\begin{align*}
w_{j+1} &= h(z_{j+1}) = h \circ g(z_j) = h \circ g \circ h^{-1}(w_j) = f(w_j) = w_j + 1 .
\end{align*}
Thus $ \re\,w_i < \re\,w_j $ where $ i < j $ for each $ i,j \in \Z $. Lemma \ref{lem-order of the argument} implies that either $ \theta_i < \theta_j $ for every integer $ i < j $ or  $ \theta_i > \theta_j $ for every integer $ i < j $ where the argument of each point $ z_j $ in the horocycle is defined in \ref{item-principal argument in horocycle}. 
%
%\smallskip \\
By induction, the equation $ w_n = w_0 + n $ holds for $ n \in \Z $. The map $ h $ is the continuous bijection on $ \hat{\C} $ under spherical metric. Then 
\begin{align*}
h(\alpha) &= \infty = \lim_{n \rightarrow \pm \infty} w_n = \lim_{n \rightarrow \pm \infty} h(z_n) = h \left(\lim_{n \rightarrow \pm \infty} z_n \right)
\end{align*}
Hence, $ \alpha = \lim_{n \rightarrow \pm \infty} z_n $. 
\end{proof}

\medskip
\begin{rk}
The orbit of the any point $ z_0 $ in $ \C \setminus \{\alpha\} $, namely, the set $ \{ g^n(z_0) \colon n \in \Z \} $ is contained in a single horocycle at $ \alpha $ of $ g $. Proposition \ref{prop-points with order of arguments} implies that every point, $ g^n(z_0) $ for $ n \in \Z $ are positioned with clockwise or counterclockwise direction. 
\end{rk}

\section{No Hyers-Ulam stability of parabolic M\"obius difference equation}

Let $ \{ b_n\}_{n \in \N_0} $ be the sequence satisfying that $ b_{n+1} = g(b_n) $ for $ n \in \N_0 $ where $ g $ is the parabolic M\"obius map $ g(z) = \frac{az+b}{cz+d} $ for $ ad-bc =1 $. We show that the above sequence $ \{ b_n\}_{n \in \N_0} $ for any initial point $ b_0 \in \C $ has no Hyers-Ulam stability. The result holds for the both cases that $ c \neq 0 $ or $ c=0 $. 

\bigskip
\begin{prop} \label{prop-no hyers-ulam of parabolic mobius}
Let $ g $ be the parabolic M\"obius map which does not fix $ \infty $, that is, $ g(z) = \frac{az+b}{cz+d} $ for $ ad-bc =1 $, $ a+d = \pm 2 $ and $ c \neq 0 $. Let $ \{ b_n\}_{n \in \N_0} $ be the sequence satisfying that $ b_{n+1} = g(b_n) $ for every $ n \in \N_0 $. Then for any initial point $ b_0 \in \C $, the sequence $ \{ b_n\}_{n \in \N_0} $ has no Hyers-Ulam stability. 
\end{prop}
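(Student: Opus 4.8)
The plan is to establish non-stability by exhibiting, for every $\e>0$, a \emph{periodic} sequence $\{a_n\}_{n\in\N_0}$ satisfying \eqref{eq-sequence a-n} which stays at a distance bounded below by a \emph{fixed} positive constant (independent of $\e$) from \emph{every} sequence $\{b_n\}_{n\in\N_0}$ with $b_{n+1}=g(b_n)$. The point is a tension between two facts: every genuine solution is attracted to the fixed point $\alpha$, while a periodic sequence returns infinitely often to a point sitting at a fixed positive distance from $\alpha$.

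First I would fix once and for all a point $z_0\in\C\setminus L_\infty$ and set $z_n:=g^n(z_0)$ for $n\in\Z$ (legitimate, as $g$ is a bijection of $\hat{\C}$). Since $L_\infty$ is $g$-invariant by Lemma~\ref{lem-invariant extended line}, the entire two-sided orbit $\{z_n\}_{n\in\Z}$ lies in $\C\setminus L_\infty$; in particular no $z_n$ is $\infty$, and no $z_n$ equals $\alpha$ because $\alpha\in L_\infty$. As $z_0\notin L_\infty$, it lies on a circular horocycle $S_p\subset\C$, so Proposition~\ref{prop-points with order of arguments} gives $z_n\to\alpha$ as $n\to\pm\infty$. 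Set $d_0:=|z_0-\alpha|>0$; this number does not depend on $\e$.

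Now fix $\e>0$. Choose $M\in\N$ so large that $|z_n-\alpha|<\e/2$ for all $|n|\ge M$, and let $\{a_n\}_{n\in\N_0}$ be the sequence of period $2M+1$ determined by $a_k:=z_{k-M}$ for $0\le k\le 2M$. For $0\le k\le 2M-1$ one has $a_{k+1}=z_{k+1-M}=g(z_{k-M})=g(a_k)$ with no error, while the single closing step obeys
\[
|a_{2M+1}-g(a_{2M})|=|z_{-M}-z_{M+1}|\le|z_{-M}-\alpha|+|\alpha-z_{M+1}|<\e ,
\]
and by periodicity the corresponding bounds hold for every $n\in\N_0$; thus $\{a_n\}$ satisfies \eqref{eq-sequence a-n}. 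Crucially, $a_n=z_0$ exactly when $n\equiv M\pmod{2M+1}$, which happens for arbitrarily large $n$.

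Finally, let $\{b_n\}_{n\in\N_0}$ be any sequence with $b_0\in\C$ and $b_{n+1}=g(b_n)$. If $b_0=\alpha$ then $b_n=\alpha$ for all $n$; otherwise $b_n\to\alpha$ as $n\to\infty$, either by Proposition~\ref{prop-points with order of arguments} or directly because, by Lemma~\ref{lem-conjugation h between g and translation}, $h(b_n)=h(b_0)\pm n\to\infty$ with $h(z)=\frac{1}{c(z-\alpha)}$. In either case choose $n_1$ with $|b_n-\alpha|<d_0/2$ for all $n\ge n_1$, and then pick $n\ge n_1$ with $n\equiv M\pmod{2M+1}$; for this $n$,
\[
|a_n-b_n|\ge|z_0-\alpha|-|b_n-\alpha|>d_0-\tfrac{d_0}{2}=\tfrac{d_0}{2}.
\]
Hence $\sup_{n}|a_n-b_n|\ge d_0/2$ for every such $\{b_n\}$. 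Since $d_0/2$ is a positive constant independent of $\e$, no function $K$ with $K(\e)\to 0$ as $\e\to 0$ can satisfy $|a_n-b_n|\le K(\e)$ for all $n$ once $\e$ is small enough to force $K(\e)<d_0/2$; therefore the difference equation has no Hyers--Ulam stability. I expect the one step needing genuine care to be the construction of $\{a_n\}$: it relies on the orbit $z_n$ converging to $\alpha$ in \emph{both} time directions, so that its far-backward and far-forward tails may be glued with error $<\e$ into a single periodic --- hence infinite --- approximate solution; once this is in place, the comparison with true solutions uses nothing beyond the attraction to $\alpha$ already established in Section~3.
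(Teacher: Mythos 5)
Your proof is correct, and it runs on the same engine as the paper's: Proposition \ref{prop-points with order of arguments} gives two-sided convergence $g^{n}(z_0)\to\alpha$ as $n\to\pm\infty$, the far-backward and far-forward tails of one orbit are glued near $\alpha$ with a single jump of size $<\e$ to make a periodic $\e$-approximate solution, and this is played off against the attraction of every exact solution to $\alpha$. The execution differs in ways worth noting. The paper builds a \emph{pre-}periodic sequence anchored at $a_0=b_0$, jumping onto an auxiliary point $q$ that must lie on a horocycle of radius at least $1+2\e$, have its entire backward orbit inside $B\big(\alpha,\frac{\e}{2}\big)$, and satisfy that $g^{N_2}(q)$ is exactly the point of $S_p\cap\ell$ opposite $\alpha$; the separation obtained is then $>1$. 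You instead fix one point $z_0\notin L_{\infty}$ in advance, take the purely periodic sequence $a_k=z_{k-M}$ of period $2M+1$, and obtain separation $d_0/2$ with $d_0=|z_0-\alpha|$ fixed independently of $\e$. This buys two things: it sidesteps the paper's somewhat delicate existence claim for $q$ (in particular the exact antipodal condition), and it matches the quantifier structure of Definition \ref{def-hyers ulam stability} more faithfully, since your single $\{a_n\}$ is shown to be far from \emph{every} exact solution rather than from the one $\{b_n\}$ it was built around. The two points in your argument that genuinely need the care you give them --- choosing $z_0$ off the invariant line $L_{\infty}$ so that the full $\Z$-orbit avoids $\infty$ and $\alpha$, and handling exact solutions starting on $L_{\infty}$ (where Proposition \ref{prop-points with order of arguments} does not directly apply) via the conjugacy $h$ --- are both handled correctly.
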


\begin{proof}
For any given $ \e > 0 $ we show that there exists a sequence $ \{ a_n\}_{n \in \N_0} $ which satisfies the following properties. 
\begin{enumerate}
\item $ | a_{n+1} - g(a_n) | \leq \e $ for all $ n \in \N_0 $, 
\item the sequence $ \{ a_n\}_{n \in \N_0} $ is pre-periodic one, and 
\item $ | a_{N_0} - b_{N_0} | \geq 1 + \e $ for some big enough $ N_0 \in \N $. 
\end{enumerate}
Then $ | a_n - b_n | \geq 1 $ for infinitely many $ n \in \N $ due to the periodicity of the sequence $ \{ a_n\}_{n \in \N_0} $. Hence, $ \{ b_n\}_{n \in \N_0} $ has no Hyers-Ulam stability. In the rest of proof, we construct the sequence $ \{ a_n\}_{n \in \N_0} $ satisfying the above properties. For any given $ z_0 \in \C $, $ \displaystyle \lim_{n \rightarrow \pm\infty} g^n(z_0) = \alpha $ by Proposition \ref{prop-points with order of arguments}. Thus for big enough $ N_1 $ the points $ g^n(z_0) $ in the disk, $ B \big(\alpha, \frac{\e}{2} \big) $ for all $ n \geq N_1 $. Thus there exists a point $ q $ in the disk $ B \big(\alpha, \frac{\e}{2} \big) $ which satisfies that   
\smallskip
\begin{enumerate}[label=\text{(4.\alph*)}]
\item $ |q - g^{N_1}(z_0)| \leq \e $, 
\item $ g^{-k}(q) \in B \big(\alpha, \frac{\e}{2} \big) $ for all $ k \in \N_0 $, 
\item the point $ q $ is in the horocycle $ S_p = \{ z \colon |z-p| = | \alpha - p| \} $ in $ \C $ where $ p $ is contained in the line $ \ell $ defined in \eqref{eq-centers of horocycles} for $ | \alpha - p | \geq 1 + 2\e $ and  
\item for some $ N_2 > 0 $, $ g^{N_2}(q) $ is the point in the  intersection of $ \ell \cap S_p \setminus \{ \alpha \} $. \smallskip
\end{enumerate}
Then the following sequence 
\begin{align} \label{eq-preperiodic seq}
\{ z_0, g(z_0), g^2(z_0), \ldots , g^{N_1}(z_0), q, g(q), \ldots , g^{N_2}(q), \ldots , g^{2N_2}(q), q , g(q) , \ldots \}
\end{align}
is pre-periodic sequence with period $ 2N_2 $. For the given sequence $ \{ b_n\}_{n \in \N_0} $ satisfying $ b_{n+1} = g(b_n) $ for all $ n \in \N_0 $, define $ \{ a_n\}_{n \in \N_0} $ as the sequence \eqref{eq-preperiodic seq}, that is, 
$ a_0 = z_0 $, $ a_n = g^n(z_0) $ for $ 1 \leq n \leq N_1 $, $  a_{N_1+1} = q $, $ a_{m+N_1+1} = g^m(q) $ for $ 1 \leq m \leq 2N_2 $ and $ a_{k + 2N_2} = a_{k} $ for every $ k \geq N_1+1 $ where $ z_0 = b_0 $. Then $ \{ a_n\}_{n \in \N_0} $ is the pre-periodic sequence satisfying $ | a_{n+1} - g(a_n) | \leq \e $ for all $ n \in \N_0 $. Moreover, the distance between $ a_{N_2+N_1+1} $ and $ \alpha $ is the diameter of the horocycle $ S_p $. Then 
\begin{align*}
| a_{N_2+N_1+1} - b_{N_2+N_1+1} | \geq | a_{N_2+N_1+1} - \alpha | - | \alpha - b_{N_2+N_1+1}| \geq 1 + 2\e - \e > 1
\end{align*}
By the periodicity of the sequence $ \{ a_n\}_{n \in \N_0} $ for $ n \geq N_1+1 $, we obtain that 
\begin{align*}
| a_{N_2+N_1+1 + 2kN_2} - b_{N_2+N_1+1} | > 1
\end{align*}
for all $ k \in \N_0 $. Hence, the sequence $ \{ b_n\}_{n \in \N_0} $ does not have Hyers-Ulam stability. 
\end{proof}

Assume that the parabolic M\"obius map $ g(z) = \frac{az+b}{cz+d} $ for $ ad-bc =1 $ fixes the infinity, that is, $ c = 0 $.  Since $ a+d = \pm 2 $ and $ ad = 1 $, we obtain that either $ a=d=1 $ or $ a=d=-1 $. Thus the M\"obius map is the translation, $ g(z) = z \pm b $. If $ b=0 $, then the sequence $ \{ b_n\}_{n \in \N_0} $ is the constant sequence and it has no Hyers-Ulam stability. Thus without loss of generality we may assume that $ b \neq 0 $. 

\bigskip
\begin{prop} \label{prop-no hyers-ulam of translation}
Let $ g(z) = z + q $ where $ q $ is a non-zero complex number. Then the sequence $ \{ b_n\}_{n \in \N_0} $ satisfying $ b_{n+1} = g(b_n) $ for any given initial point $ b_0 \in \C $ has no Hyers-Ulam stability. 
\end{prop}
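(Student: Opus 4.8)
The plan is to mimic the structure of the proof of Proposition \ref{prop-no hyers-ulam of parabolic mobius}, but now the dynamics are those of the translation $g(z) = z+q$ on $\C$, which are considerably simpler: the orbit $\{b_0 + nq : n \in \N_0\}$ marches off to infinity in a straight line, and there is no finite fixed point and no finite invariant circle to exploit. So the analogue of the pre-periodic comparison sequence must be built by hand directly in $\C$. The key observation is that $g$ is an isometry of $\C$, so $|g(a) - g(b)| = |a-b|$ for all $a,b$; in particular any perturbation of size $\e$ introduced at one step is neither amplified nor damped by subsequent applications of $g$. This means that to produce a sequence $\{a_n\}$ satisfying $|a_{n+1} - g(a_n)| \le \e$ that drifts a bounded distance away from $\{b_n\}$ and then stays there periodically, it suffices to introduce a single well-chosen $\e$-sized "correction" and then let both sequences run in parallel.

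Concretely, for a given $\e > 0$, I would fix an integer $M$ large enough that $\e M \ge 2$ (say), and choose a unit vector $u \in \C$ with $u \perp q$ — more precisely, any $u$ with $|u| = 1$; perpendicularity is not essential, it just makes the geometry cleanest. Define $a_n = b_n$ for $0 \le n \le N_1$ for some starting index $N_1$ to be pinned down, and then over the next $M$ steps insert the correction: set $a_{N_1 + k + 1} = g(a_{N_1+k}) + \frac{\e}{M}\cdot(\text{something of modulus }M)$ — more carefully, add $\e u$ total, spread as $\tfrac{\e}{?}$ per step isn't needed; simplest is to add $\e u$ at a single step, giving $|a_{N_1+1} - g(a_{N_1})| = \e$, and then set $a_{n+1} = g(a_n)$ for all $n > N_1$, so that $a_n = b_n + \e u$ for all $n > N_1$ — but this only gives separation $\e$, not bounded-below-by-$1$. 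To get separation at least $1$ I would instead repeat the correction: over $M$ consecutive steps add $\e u$ each time, so that after these $M$ steps $a_n - b_n = M\e\, u$ with $|a_n - b_n| = M\e \ge 2 > 1$, and thereafter set $a_{n+1} = g(a_n)$, which preserves the difference exactly since $g$ is a translation. Each of those $M$ correction steps satisfies $|a_{n+1} - g(a_n)| = \e$, and all other steps satisfy it with equality $0$, so property (1) of the Hyers–Ulam framework (the inequality \eqref{eq-sequence a-n}) holds for all $n$.

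For the final contradiction I need $\{a_n\}$ to be pre-periodic, or at least to have $|a_n - b_n| \ge 1$ for infinitely many $n$ with a separation that does not shrink as $\e \to 0$; the cleanest route is the latter, since after the correction phase $|a_n - b_n| = M\e$ is constant in $n$ and bounded below by $2$ regardless of how small $\e$ is (because $M = M(\e)$ was chosen with $M\e \ge 2$). Thus for every candidate bound $K(\e)$ with $K(\e) \to 0$, there is $\e$ small enough that $K(\e) < 1 \le |a_n - b_n|$ for all large $n$, contradicting Definition \ref{def-hyers ulam stability}; note that $b_0 = a_0$ is forced since the definition requires comparing to a solution of \eqref{eq-sequence b-n}, and any such solution is determined by its initial value — but here the contradiction is even more robust because the separation can be made to hold for \emph{every} solution $\{b_n\}$ simultaneously once we fix $a_0 = b_0$. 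I do not expect any genuine obstacle: the only thing to be careful about is bookkeeping the indices of the correction phase and verifying the inequality at the junction steps $n = N_1$ and $n = N_1 + M$, and stating clearly that $M$ is allowed to depend on $\e$ (which is exactly what defeats the existence of a uniform $K(\e)$). If the paper prefers the pre-periodic formulation to match Proposition \ref{prop-no hyers-ulam of parabolic mobius}, one can instead close the loop by returning: after drifting out by $M\e u$, add $-\e u$ for $M$ steps to come back to $b_n$, giving a genuinely periodic tail of period $2M$ — but this is cosmetic, and the non-uniform-separation argument above already finishes the proof.
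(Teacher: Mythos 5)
Your construction is correct, and it rests on the same basic observation as the paper's proof: the translation is an isometry, so $\e$-sized perturbations applied in a fixed direction accumulate linearly and are never damped. The execution differs, though. The paper simply takes $a_n = a_0 + n(\e + q)$, i.e.\ it applies the perturbation $\e$ at \emph{every} step forever, so that $|a_n - b_n| = |b_0 - a_0 - n\e| \rightarrow \infty$; this is shorter than your finite burst of $M$ corrections with $M\e \ge 2$, and it gives a strictly stronger conclusion --- no finite bound $K$ of any kind can work, so one never needs the hypothesis $K(\e) \rightarrow 0$ or any bookkeeping at junction indices. Your truncated version, which leaves $|a_n - b_n| = M\e \ge 2$ constant after the burst, does suffice, but one point in your write-up needs repair: Definition \ref{def-hyers ulam stability} quantifies \emph{existentially} over the exact solution, so $b_0 = a_0$ is not forced, and for your $\{a_n\}$ the exact solution with initial point $b_0 + M\e u$ agrees with $a_n$ for all large $n$. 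The repair is the triangle-inequality argument you only gesture at: for any exact solution $\{b_n'\}$, either $|b_0' - b_0| \ge M\e/2 \ge 1$, in which case the terms with $n \le N_1$ are far from $a_n$, or else $|a_n - b_n'| = |b_0 - b_0' + M\e u| \ge M\e/2 \ge 1$ for $n \ge N_1 + M$. The paper's unbounded-drift version sidesteps this entirely, since $|b_0 - a_0 - n\e| \rightarrow \infty$ for every choice of $b_0$. Your closing remark that one could make $\{a_n\}$ genuinely periodic in the difference $a_n - b_n$ to mirror Proposition \ref{prop-no hyers-ulam of parabolic mobius} is right but, as you say, unnecessary here.
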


\begin{proof}
For a given $ \e > 0 $, define the sequence $ \{ a_n\}_{n \in \N_0} $ with its elements as follows 
\begin{align*}
a_n = a_0 + n ( \e + q )
\end{align*}
for every $ n \in \N_0 $. Thus 
\begin{align*} 
| a_{n+1} - g(a_n) | = | a_0 + (n+1) ( \e + q ) - [ \,\!a_0 + n ( \e + q )+q ]| = \e .
\end{align*}
By induction each element of the sequence $ \{ b_n\}_{n \in \N_0} $ is of the following form, $ b_n = b_0 + nq $. Then  we obtain that
\begin{align} \label{eq-unbounded difference of translation}
\nonumber | b_n - a_n | &= | b_0 + nq - [a_0 + n ( \e + q )] | = | b_0 - a_0 - n\e | \\
&\geq \big| |b_0-a_0| - n\e \big|  .
\end{align}
However, $ | a_n - b_n | \rightarrow \infty $ as $ n \rightarrow \infty $ by the inequality \eqref{eq-unbounded difference of translation}. Hence, $ \{ b_n\}_{n \in \N_0} $ has no Hyers-Ulam stability. 
\end{proof}

Proposition \ref{prop-no hyers-ulam of parabolic mobius} and Proposition \ref{prop-no hyers-ulam of translation} implies the following theorem. 

\bigskip
\begin{thm}
Let $ g $ be the parabolic M\"obius map, that is, $ g(z) = \frac{az+b}{cz+d} $ for $ ad-bc =1 $ and $ a+d = \pm 2 $. Let $ \{ b_n\}_{n \in \N_0} $ be the sequence satisfying that $ b_{n+1} = g(b_n) $ for every $ n \in \N_0 $. Then for any initial point $ b_0 \in \C $, the sequence $ \{ b_n\}_{n \in \N_0} $ has no Hyers-Ulam stability. 
\end{thm}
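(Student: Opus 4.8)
The plan is to deduce the theorem from the two propositions already proved, by splitting on whether the map $g$ fixes $\infty$, i.e. on whether $c = 0$. The parabolic hypotheses $ad - bc = 1$ and $a + d = \pm 2$ leave exactly two structural cases: either $c \neq 0$, so that $g$ is a Möbius map not fixing $\infty$ with unique finite fixed point $\alpha = \frac{a-d}{2c}$, or $c = 0$, so that $g$ is affine.

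First I would dispose of the case $c \neq 0$. Here $g$ is a parabolic M\"obius map which does not fix $\infty$, so Proposition \ref{prop-no hyers-ulam of parabolic mobius} applies directly: for every initial point $b_0 \in \C$ the sequence $\{b_n\}_{n \in \N_0}$ defined by $b_{n+1} = g(b_n)$ has no Hyers-Ulam stability. Nothing further is needed in this case.

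Next I would handle $c = 0$. When $c = 0$ the determinant condition becomes $ad = 1$, and then $a + d = \pm 2$ forces $a = d = 1$ or $a = d = -1$; in either case $g(z) = \frac{az+b}{d} = z + q$ is a translation with $q = b/d \in \{b, -b\}$. If $q = 0$ then $\{b_n\}_{n \in \N_0}$ is the constant sequence $b_n = b_0$, and taking $a_n = b_0 + n\e$ gives $|a_{n+1} - g(a_n)| = \e$ for all $n$ while $|a_n - b_n| = n\e \to \infty$, so there is no Hyers-Ulam stability. If $q \neq 0$, this is exactly the hypothesis of Proposition \ref{prop-no hyers-ulam of translation}, which again rules out Hyers-Ulam stability for any $b_0 \in \C$. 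Since the two cases $c \neq 0$ and $c = 0$ are exhaustive, the theorem follows.

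There is essentially no obstacle beyond the bookkeeping of this case split: all the genuine work — constructing the pre-periodic perturbed sequence $\{a_n\}_{n \in \N_0}$ that avoids the pole set $\{g^{-n}(\infty)\}$ while its orbit travels to the far end of a large horocycle, and the unbounded-drift estimate for translations — is already carried out in Propositions \ref{prop-no hyers-ulam of parabolic mobius} and \ref{prop-no hyers-ulam of translation}. The only point that deserves explicit mention is the degenerate sub-case $q = 0$, which is not literally covered by Proposition \ref{prop-no hyers-ulam of translation} (that proposition assumes $q \neq 0$) and so must be treated separately as above; it is also noted in the discussion preceding that proposition.
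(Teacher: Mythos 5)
Your proposal is correct and follows exactly the paper's route: the theorem is deduced by splitting on $c\neq 0$ (Proposition \ref{prop-no hyers-ulam of parabolic mobius}) versus $c=0$, where $g$ reduces to a translation handled by Proposition \ref{prop-no hyers-ulam of translation}, with the degenerate identity case ($b=0$) noted separately. Your explicit perturbation $a_n=b_0+n\e$ for the identity case merely fills in a detail the paper only asserts.
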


\bigskip
\begin{rk}
The proof of Proposition \ref{prop-no hyers-ulam of parabolic mobius} uses the horocycle and pre periodic sequence. In \cite{BW} the periodicity of sine function is used to proving the lack of Hyers-Ulam stability of the linear isometry in general metric space. 
\end{rk}

\section{Real parabolic M\"obius difference equation}
Let $ g $ be the real parabolic M\"obius map, that is, $ g(x) = \frac{ax+b}{cx+d} $ on the extended real line, $ \hat{\R} = \R \cup \{ \infty \} $ where $ a, b, c, d $ are real numbers for $ ad-bc=1 $ and $ a+d =\pm 2 $. The extended real line corresponds the extended line $ L_{\infty} $ of the parabolic M\"obius map on the Riemann sphere. However, $ \hat{\R} $ does not contain any horocycle with finite diameter. In the case that $ c=0 $, the real parabolic M\"obius map $ g $ is the translation on the real line. Then Proposition \ref{prop-no hyers-ulam of translation} is applicable to the translation on $ \R $, which does not have Hyers-Ulam stability. So we may assume that real parabolic M\"obius map does not fix $ \infty $. In this section, we separate the real line to subintervals of which endpoints are $ -\frac{d}{c} $, $ \alpha $ and $ \frac{a}{c} $. Moreover, we  calculate the image of each intervals under parabolic M\"obius map. %The map is of two cases --- either $ (a+d)c > 0 $ or $ (a+d)c < 0 $. 
\medskip
\begin{rk}
Real parabolic M\"obius map can be realized as the restriction of the parabolic M\"obius map on the extended real line. Thus real parabolic map is continuous under spherical metric on $ \hat{\R} $. 
\end{rk}

We use the notation $ +\infty $ as the unbounded limit which is greater than any positive number and $ -\infty $ as the unbounded limit which is less than any negative number. The following auxiliary lemma is for later use. 

\bigskip
\begin{lem} \label{lem-auxiliary calculation for const}
Let $ g $ be the parabolic M\"obius map which does not fix $ \infty $, that is, $ g(x) = \frac{ax+b}{cx+d} $ for $ ad-bc=1 $, $ a+d =\pm 2 $ and $ c \neq 0 $. Then the following equations hold 
\begin{align*}
\frac{a+d}{2} = \frac{2}{a+d}, \quad c\alpha + d = \frac{a+d}{2} \quad \textrm{and} \quad
\alpha = \frac{a}{c} - \frac{2}{c(a+d)} .
\end{align*}
\end{lem}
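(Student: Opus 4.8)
The plan is to verify the three stated identities by direct substitution, using only the defining relations $ad-bc=1$, $a+d=\pm 2$, and the formula $\alpha=\frac{a-d}{2c}$ for the unique fixed point established at the start of Section~2. No circles, horocycles, or the conjugation $h$ are needed here; this lemma simply records three elementary consequences that Sections~5 and~6 will invoke repeatedly, so the proof will be a short computation.

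First I would dispose of $\frac{a+d}{2}=\frac{2}{a+d}$. Since $a+d=\pm 2$ is in particular nonzero, $\big(\frac{a+d}{2}\big)^2=\frac{(a+d)^2}{4}=1$, so $\frac{a+d}{2}$ equals its own reciprocal, which is $\frac{2}{a+d}$.

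Next, for $c\alpha+d=\frac{a+d}{2}$, I would substitute $\alpha=\frac{a-d}{2c}$ and simplify: $c\alpha+d=\frac{a-d}{2}+d=\frac{a+d}{2}$. Finally, the identity $\alpha=\frac{a}{c}-\frac{2}{c(a+d)}$ is, after multiplying through by $c$ and transposing, equivalent to $c\alpha-a=-\frac{2}{a+d}$; and indeed $c\alpha-a=\frac{a-d}{2}-a=-\frac{a+d}{2}$, which by the first identity equals $-\frac{2}{a+d}$. This completes the verification.

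I do not expect any genuine obstacle: the content is purely computational. The only points requiring a moment's care are that the denominators $c$ and $a+d$ are nonzero — guaranteed by the standing hypotheses $c\neq 0$ and $a+d=\pm 2$ — and that each identity is covariant under the sign of $a+d$, so a single computation handles both the $a+d=2$ and $a+d=-2$ cases at once.
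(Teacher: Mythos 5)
Your proof is correct and follows essentially the same route as the paper: both derive $\frac{a+d}{2}=\frac{2}{a+d}$ from $\left(\frac{a+d}{2}\right)^2=1$ and obtain the remaining identities by direct substitution of $\alpha=\frac{a-d}{2c}$. You additionally spell out the middle identity $c\alpha+d=\frac{a+d}{2}$, which the paper's proof leaves implicit, but this is the same elementary computation.
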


\begin{proof}
Observe that $ \frac{a+d}{2} = \pm 1 $. Thus
\begin{align} 
1 = \left( \frac{a+d}{2} \right)^2 = \frac{a+d}{2} \cdot \frac{2}{a+d} .
\end{align}
Thus $ \frac{a+d}{2} = \frac{2}{a+d} $. Recall that $ \alpha $ is the unique fixed point of the parabolic M\"obius map $ g $ and $ \alpha = \frac{a-d}{2c} $. Then the following equation 
\begin{align*}
\frac{a}{c} - \alpha = \frac{a}{c} - \frac{a-d}{2c} = \frac{a+d}{2c} = \frac{2}{c(a+d)} 
\end{align*}
holds. Hence, $ \alpha = \frac{a}{c} - \frac{2}{c(a+d)} $. 
\end{proof}

Real parabolic M\"obius map has two cases, one of which is $ (a+d)c > 0 $ and the other is $ (a+d)c < 0 $. Lemma \ref{lem-auxiliary calculation for const} implies that $ \alpha < \frac{a}{c} $ if and only if $ (a+d)c > 0 $. We deal with the case $ (a+d)c > 0 $ in the following lemmas. 

\bigskip
\begin{lem} \label{lem-image of interval under real parabolic map}
Let $ g $ be the real parabolic M\"obius map, that is, $ g(x) = \frac{ax+b}{cx+d} $ where $ a, b, c $ and $ d $ are real numbers for $ ad-bc=1 $, $ a+d =\pm 2 $ and $ c \neq 0 $. Suppose that $ (a+d)c > 0 $. Then $ \alpha < x < +\infty $ if and only if $  \alpha < g(x) < \frac{a}{c} $. The inequality $ -\infty < x < -\frac{d}{c} $ holds if and only if $ \frac{a}{c} < g(x) < +\infty $. 
\end{lem}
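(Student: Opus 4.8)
The plan is to analyze the real parabolic M\"obius map $g$ directly via its action on the extended real line, using the three distinguished points $-\frac{d}{c}$, $\alpha$ and $\frac{a}{c}$ together with the relations collected in Lemma \ref{lem-auxiliary calculation for const}. Throughout I assume $(a+d)c > 0$, so that $\alpha < \frac{a}{c}$; the case $(a+d)c < 0$ will be symmetric. First I would compute $g(\alpha) = \alpha$ (the fixed point), $g(+\infty) = \lim_{x \to +\infty} g(x) = \frac{a}{c}$, and $g\bigl(-\frac{d}{c}\bigr) = +\infty$. These three values already suggest the two claimed equivalences: $g$ should map the interval $(\alpha, +\infty)$ monotonically onto $(\alpha, \frac{a}{c})$, and the interval $(-\infty, -\frac{d}{c})$ onto $(\frac{a}{c}, +\infty)$.

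The key step is to establish monotonicity of $g$ on the relevant intervals. Since $g(x) = \frac{ax+b}{cx+d}$ with $ad - bc = 1$, its derivative is $g'(x) = \frac{ad-bc}{(cx+d)^2} = \frac{1}{(cx+d)^2} > 0$ wherever $cx + d \ne 0$, so $g$ is strictly increasing on each maximal subinterval of $\R$ not containing the pole $-\frac{d}{c}$. I would locate the pole relative to $\alpha$: using $c\alpha + d = \frac{a+d}{2}$ from Lemma \ref{lem-auxiliary calculation for const}, one checks $-\frac{d}{c}$ versus $\alpha$ by the sign of $c(\alpha - (-\frac{d}{c})) = c\alpha + d = \frac{a+d}{2}$, whose sign matches that of $c$ under the hypothesis $(a+d)c > 0$; hence $-\frac{d}{c} < \alpha$. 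So $(\alpha, +\infty)$ lies entirely to the right of the pole, $g$ is a strictly increasing bijection there, and it carries the endpoints $\alpha \mapsto \alpha$ and $+\infty \mapsto \frac{a}{c}$, giving the first equivalence $\alpha < x < +\infty \iff \alpha < g(x) < \frac{a}{c}$. Similarly $(-\infty, -\frac{d}{c})$ lies to the left of the pole; $g$ is strictly increasing there with $-\infty \mapsto \frac{a}{c}$ (the same horizontal asymptote) and $-\frac{d}{c}^{-} \mapsto +\infty$, yielding $-\infty < x < -\frac{d}{c} \iff \frac{a}{c} < g(x) < +\infty$.

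I expect the main obstacle to be purely bookkeeping rather than conceptual: one must be careful about which side of the pole each interval sits on and about the direction of the inequalities, since under $(a+d)c < 0$ the roles of $\alpha$ and $\frac{a}{c}$ swap and the pole moves to the other side of $\alpha$. The cleanest way to keep this straight is to push everything through the conjugacy $h(z) = \frac{1}{c(z-\alpha)}$ of Lemma \ref{lem-conjugation h between g and translation}, which conjugates $g$ to the translation $w \mapsto w \pm 1$ on $\R \cup \{\infty\}$; since a translation is order-preserving on $\R$, monotonicity and the images of intervals can be read off there and transported back, the direction of the shift ($+1$ or $-1$) being controlled precisely by the sign of $a+d$. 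Either route — direct derivative computation or conjugacy — reduces the lemma to elementary monotone-bijection facts, so the proof should be short.
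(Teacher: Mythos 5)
Your argument is correct, but it runs along a different track from the paper's. The paper proves both equivalences by a single chain of \emph{reversible} algebraic inequalities: it rewrites $g(x) = \frac{a}{c} - \frac{1}{c(cx+d)}$, multiplies the hypothesis $\alpha < x < +\infty$ through by $c^2$, and tracks signs step by step using the identities $c(c\alpha+d) = \frac{c(a+d)}{2}$ and $\alpha = \frac{a}{c} - \frac{2}{c(a+d)}$ from Lemma \ref{lem-auxiliary calculation for const}; because every step is an equivalence, the ``if and only if'' comes for free. You instead use the derivative $g'(x) = (cx+d)^{-2} > 0$, locate the pole via the same identity $c\alpha + d = \frac{a+d}{2}$ to get $-\frac{d}{c} < \alpha < \frac{a}{c}$, and read off the images of the two intervals from monotonicity together with the endpoint values $g(\alpha)=\alpha$, $g(+\infty)=\frac{a}{c}$, $g(-\frac{d}{c})=\infty$. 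This is a cleaner global picture and meshes nicely with the dynamics in the rest of Section 5, but note that monotonicity on an interval only gives the forward implication directly; for the converse (e.g.\ that no $x$ outside $(\alpha,+\infty)$ can satisfy $\alpha < g(x) < \frac{a}{c}$) you are implicitly invoking that $g$ is a bijection of $\hat{\R}$, so that the restriction of $g$ to each interval is onto its image and the preimage of $(\alpha,\frac{a}{c})$ is exactly $(\alpha,+\infty)$. That is standard for M\"obius maps and worth one sentence; with it stated, your proof is complete. Your closing suggestion to route everything through the conjugacy $h$ of Lemma \ref{lem-conjugation h between g and translation} would also work, but $h$ reverses or preserves order depending on the side of $\alpha$, so the bookkeeping there is no lighter than in either direct approach.
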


\begin{proof}
The following equivalent conditions prove the first part of the lemma. Observe that $ \frac{ax+b}{cx+d} = \frac{a}{c} - \frac{1}{c(cx+d)} $. Lemma  \ref{lem-auxiliary calculation for const} is applied to the followings. 
\begin{align*}
+\infty > x > \alpha & \Longleftrightarrow +\infty > c^2x + cd > c^2\alpha + cd \\
& \Longleftrightarrow +\infty > c(cx+d) > c(c\alpha+d) = \frac{c(a+d)}{2} > 0 \\
& \Longleftrightarrow \quad \ 0 < \frac{1}{c(cx+d)} < \frac{2}{c(a+d)} < +\infty \\
& \Longleftrightarrow \quad \ 0 > -\frac{1}{c(cx+d)} > - \frac{2}{c(a+d)} \\
& \Longleftrightarrow \quad \frac{a}{c} > \frac{a}{c} -\frac{1}{c(cx+d)} > \frac{a}{c} - \frac{2}{c(a+d)} = \alpha \\
& \Longleftrightarrow \quad \frac{a}{c} > \frac{ax+b}{cx+d} > \alpha .
%\end{align*}
%
\intertext{Hence, $ \alpha < x < +\infty $ if and only if $ \alpha < g(x) < \frac{a}{c} $. Moreover, the following equivalent condition completes the proof}
%
%\begin{align*}
-\infty < x < -\frac{d}{c} & \Longleftrightarrow -\infty < c^2x + cd < 0 \\
& \Longleftrightarrow -\infty < \frac{1}{c(cx+d)} < 0 \\
& \Longleftrightarrow \quad \ 0 < -\frac{1}{c(cx+d)} < +\infty \\
& \Longleftrightarrow \quad \frac{a}{c} < \frac{a}{c} -\frac{1}{c(cx+d)} < +\infty \\
& \Longleftrightarrow \quad \frac{a}{c} < \frac{ax+b}{cx+d} < +\infty .
\end{align*}
Hence, $ -\infty < x < -\frac{d}{c} $ if and only if $ \frac{a}{c} < g(x) < +\infty $. 
\end{proof}

\bigskip
\begin{cor} \label{cor-second iterated image under g}
Let $ g $ be the real parabolic M\"obius map, which is $ g(x) = \frac{ax+b}{cx+d} $ where $ a, b, c $ and $ d $ are real numbers for $ ad-bc=1 $, $ a+d =\pm 2 $ and $ c \neq 0 $. Suppose that $ (a+d)c > 0 $. If $ -\infty < x < -\frac{d}{c} $, then $ \alpha < g^2(x) < \frac{a}{c} $. 
\end{cor}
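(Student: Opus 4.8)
The plan is simply to apply Lemma \ref{lem-image of interval under real parabolic map} twice, chaining the two implications it provides. First I would use the second equivalence of that lemma: since $-\infty < x < -\frac{d}{c}$, we get $\frac{a}{c} < g(x) < +\infty$. The point is now to feed $g(x)$ back into the first equivalence of the same lemma, which says $\alpha < y < +\infty$ if and only if $\alpha < g(y) < \frac{a}{c}$.

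The one thing that must be checked before the chaining works is that $\frac{a}{c} < g(x)$ places $g(x)$ inside the interval $(\alpha,+\infty)$ to which the first equivalence applies; this needs $\alpha \le \frac{a}{c}$. But under the standing hypothesis $(a+d)c > 0$, the remark immediately following Lemma \ref{lem-auxiliary calculation for const} gives exactly $\alpha < \frac{a}{c}$ (indeed $\frac{a}{c} - \alpha = \frac{2}{c(a+d)} > 0$). Hence $\frac{a}{c} < g(x) < +\infty$ forces $\alpha < g(x) < +\infty$.

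Applying the first equivalence of Lemma \ref{lem-image of interval under real parabolic map} with $y = g(x)$ then yields $\alpha < g(g(x)) < \frac{a}{c}$, that is, $\alpha < g^2(x) < \frac{a}{c}$, which is the assertion. There is no real obstacle here: the only subtlety is the sign check $\alpha < \frac{a}{c}$, which is supplied by the preceding lemma, and the rest is a direct composition of the two cases already established.
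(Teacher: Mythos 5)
Your proof is correct and is precisely the argument the paper intends: the corollary is stated without proof as an immediate consequence of Lemma \ref{lem-image of interval under real parabolic map}, obtained by chaining its two equivalences exactly as you do, with the inclusion $\left(\frac{a}{c},+\infty\right)\subset(\alpha,+\infty)$ justified by $\frac{a}{c}-\alpha=\frac{2}{c(a+d)}>0$ from Lemma \ref{lem-auxiliary calculation for const}. Your explicit sign check $\alpha<\frac{a}{c}$ is the one detail the paper leaves implicit, and you have supplied it correctly.
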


\bigskip
\begin{lem}  \label{lem-limit of image under g}
Let $ g $ be the real parabolic M\"obius map, which is $ g(x) = \frac{ax+b}{cx+d} $ where $ a, b, c $ and $ d $ are real numbers for $ ad-bc=1 $, $ a+d =\pm 2 $ and $ c \neq 0 $. Suppose that $ (a+d)c > 0 $. If $ x $ satisfies the inequality $ \alpha < x < \frac{a}{c} $, then the inequality 
\begin{align*}
\alpha < \cdots < g^n(x) < g^{n-1}(x)< \cdots < g(x) < x
\end{align*}
holds and $ \lim_{n \rightarrow +\infty} g^n(x) = \alpha $. 
\end{lem}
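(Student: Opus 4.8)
The plan is to prove this in three steps: forward invariance of the interval $(\alpha,\frac{a}{c})$, the pointwise inequality $g(x)<x$ on that interval, and then convergence of the orbit to $\alpha$.

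\textbf{Step 1 (invariance).} By Lemma~\ref{lem-image of interval under real parabolic map}, $\alpha<x<+\infty$ implies $\alpha<g(x)<\frac{a}{c}$. In particular, since $\frac{a}{c}<+\infty$, the interval $(\alpha,\frac{a}{c})$ is mapped into itself by $g$, so once $x$ lies in it we have $g^n(x)\in(\alpha,\frac{a}{c})$ for every $n\geq 1$.

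\textbf{Step 2 (monotonicity).} I would factor
\[
x-g(x)=\frac{x(cx+d)-(ax+b)}{cx+d}=\frac{cx^2-(a-d)x-b}{cx+d}=\frac{c(x-\alpha)^2}{cx+d},
\]
using that $cz^2-(a-d)z-b$ has $\alpha=\frac{a-d}{2c}$ as a double root (its discriminant is $(a+d)^2-4=0$; this is exactly the fixed-point equation from the first lemma). Writing $cx+d=c(x-\alpha)+(c\alpha+d)=c(x-\alpha)+\frac{a+d}{2}$ via Lemma~\ref{lem-auxiliary calculation for const}, and using $(a+d)c>0$, both summands carry the sign of $c$ whenever $x>\alpha$, so $cx+d$ carries the sign of $c$ there. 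Hence $\frac{c(x-\alpha)^2}{cx+d}>0$ for $x>\alpha$, that is, $g(x)<x$ on $(\alpha,\frac{a}{c})$.

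\textbf{Step 3 (limit).} By induction combining Step~1 and Step~2, $\alpha<g^{n+1}(x)<g^n(x)<\cdots<g(x)<x$, so $\{g^n(x)\}$ is strictly decreasing and bounded below by $\alpha$, hence converges to some finite $L\in[\alpha,\frac{a}{c})$. Since $g$ is continuous on $\hat{\R}$ in the spherical metric, passing to the limit in $g^{n+1}(x)=g(g^n(x))$ yields $g(L)=L$, and uniqueness of the fixed point forces $L=\alpha$. Alternatively, $\lim_{n\to+\infty}g^n(x)=\alpha$ follows at once from Proposition~\ref{prop-points with order of arguments} applied to the horocycle $L_\infty=\hat{\R}$; only the monotone ordering needs the computation above. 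The one point requiring care is the sign of $cx+d$, and the hypothesis $(a+d)c>0$ is precisely what makes it uniform across the subcases $c>0$ (forcing $a+d=2$) and $c<0$ (forcing $a+d=-2$); beyond that there is no real obstacle.
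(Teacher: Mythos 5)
Your proposal is correct and follows essentially the same route as the paper's proof: the identity $x-g(x)=\frac{c(x-\alpha)^2}{cx+d}$, the positivity of $c(cx+d)$ for $x>\alpha$ under $(a+d)c>0$ (which you obtain by decomposing $cx+d=c(x-\alpha)+\frac{a+d}{2}$ and the paper obtains from the chain of equivalences in Lemma~\ref{lem-image of interval under real parabolic map}), forward invariance of $\left(\alpha,\frac{a}{c}\right)$, and monotone convergence to the unique fixed point. No gaps.
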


\begin{proof}
The following equivalent conditions hold
\begin{align*}
\alpha < x < \frac{a}{c} \ & \Longleftrightarrow c^2\alpha + cd < c^2x + cd < c^2\frac{a}{c}+cd \\
\ & \Longleftrightarrow c(c\alpha+d) < c(cx+d) < c(a+d) .
\end{align*}
Since $ c(a+d) > 0 $ and $ c(c\alpha + d) = \frac{c(a+d)}{2} $, the inequality $ 0 < c(cx+d) $ is satisfied. Thus $ g(x) - x $ is as follows
\begin{align*}
g(x) - x = \frac{ax+b}{cx+d} - x = - \frac{cx^2 - (a-d)x -b}{cx+d} = - \frac{c(x-\alpha)^2}{cx+d} .
\end{align*}
Thus the equation 
$ g(x) - x = - \frac{c^2(x-\alpha)^2}{c(cx+d)} $
holds. Since both $ c^2 $ and $ c(cx+d) $ are positive number, $ g(x) - x < 0 $ for every $ x $ in the interval $ \left( \alpha, \frac{a}{c} \right) $. Lemma \ref{lem-image of interval under real parabolic map} implies that $ g((\alpha, +\infty)) = \left( \alpha, \frac{a}{c} \right) $. Thus we obtain that $ g\left(\left( \alpha, \frac{a}{c} \right)\right) \subset \left( \alpha, \frac{a}{c} \right) $. By induction, for any $ x_0 \in \left( \alpha, \frac{a}{c} \right) $ the set $ \{ g^k(x_0) \ | \ k \in \N \} $ is contained in the same interval. Denote $ g^n(x_0) $ by $ x_n $ for each $ n \in \N $. Thus the following inequality
\begin{align*}
g(x_{n-1}) - x_{n-1} = g \circ g^{n-1}(x_0) - g^{n-1}(x_0) = g^n(x_0) - g^{n-1}(x_0) < 0 
\end{align*}
holds. Thus inductively we obtain that 
\begin{align*}
\alpha < \cdots < g^n(x) < g^{n-1}(x)< \cdots < g(x) < x .
\end{align*}
The sequence $ \{g^n(x) \}_{n \in \N_0} $ is a decreasing sequence bounded below by $ \alpha $. Then there exists $ \lim_{n \rightarrow +\infty} g^n(x) $, say $ \beta $. However,  by the continuity of $ g $, $ \beta $ is a fixed point of $ g $. The uniqueness of the fixed point of $ g $ implies that $ \beta = \alpha $. Hence, $ \lim_{n \rightarrow +\infty} g^n(x) $ is $ \alpha $. 
\end{proof}

%%%%%%%%%%%%%%%%%%%%%%%%%%%%%%%%%%%%%%%%%%%%%%
\begin{figure}
    \centering
    \begin{subfigure}[b]{0.48\textwidth}
        \includegraphics[width=\textwidth]{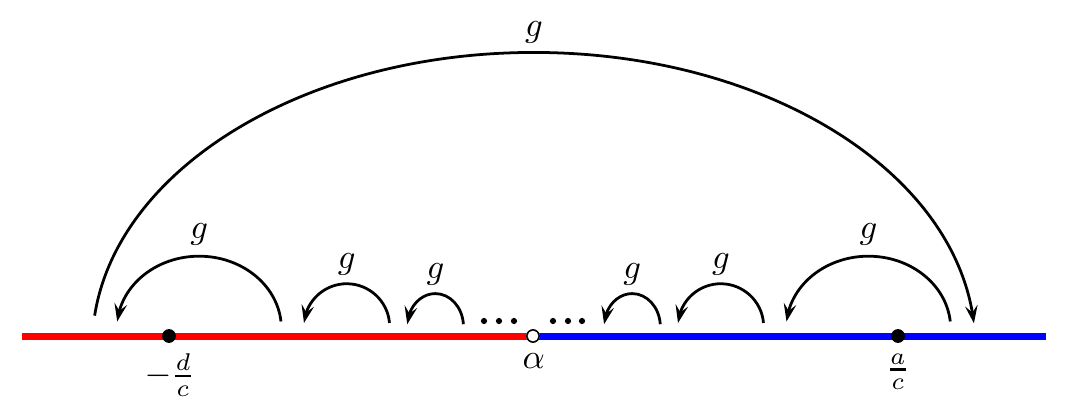}
        \caption{$(a+d)c>0$}
        \label{fig:real parabolic map (a+d)c>0}
    \end{subfigure}
    ~ %add desired spacing between images, e. g. ~, \quad, \qquad, \hfill etc. 
      %(or a blank line to force the subfigure onto a new line)      
    \begin{subfigure}[b]{0.48\textwidth}
        \includegraphics[width=\textwidth]{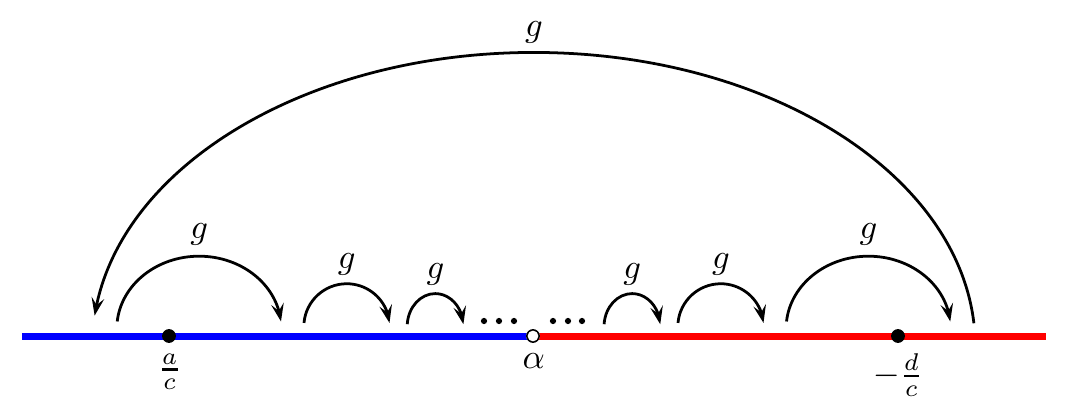}
        \caption{$(a+d)c<0$}
        \label{fig:real parabolic map (a+d)c<0}
    \end{subfigure}
    \caption{Iterated images under real parabolic M\"obius map, $ g(z)=\frac{az+b}{cz+d} $}\label{fig:fig:real parabolic map}
\end{figure}
%%%%%%%%%%%%%%%%%%%%%%%%%%%%%%%%%%%%%%%%%%%%
%
We obtain the similar result if the inequality $ c(a+d) < 0 $ holds in the following lemma.

\bigskip
\begin{lem} \label{lem-image of interval under real parabolic map 2}
Let $ g $ be the real parabolic M\"obius map, which is $ g(x) = \frac{ax+b}{cx+d} $ where $ a, b, c $ and $ d $ are real numbers for $ ad-bc=1 $, $ a+d =\pm 2 $ and $ c \neq 0 $. Suppose that $ (a+d)c < 0 $. Then 
\begin{itemize}
\item $ -\infty < x < \alpha $ if and only if $ \frac{a}{c}<g(x)<\alpha $, 
\item $ -\frac{d}{c} < x < +\infty $ if and only if $ -\infty < g(x) < \frac{a}{c} $ and 
\item if $ \frac{a}{c} < x < \alpha $, then $ x < g(x) < g^2(x) < \cdots < g^n(x) < \cdots < \alpha $ and moreover, $ \lim_{n \rightarrow +\infty} g^n(x) = \alpha $. 
\end{itemize}
\end{lem}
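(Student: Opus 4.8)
The plan is to mirror, with the signs reversed, the three facts already established for the case $(a+d)c>0$ (Lemma~\ref{lem-image of interval under real parabolic map} and Lemma~\ref{lem-limit of image under g}), using throughout the identities of Lemma~\ref{lem-auxiliary calculation for const}, namely $c\alpha+d=\frac{a+d}{2}$ and $\alpha=\frac{a}{c}-\frac{2}{c(a+d)}$, the decomposition $g(x)=\frac{a}{c}-\frac{1}{c(cx+d)}$, and the already-noted fact that $(a+d)c<0$ forces $\frac{a}{c}<\alpha$. Every step below is an equivalence, so each ``if and only if'' is obtained by reading the chain forwards and backwards.

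For the first bullet I would start from $-\infty<x<\alpha$, multiply by $c^2>0$ and add $cd$ to get $-\infty<c(cx+d)<c(c\alpha+d)=\frac{c(a+d)}{2}<0$. Since this quantity is negative, passing to reciprocals reverses and bounds the interval, giving $\frac{2}{c(a+d)}<\frac{1}{c(cx+d)}<0$, hence $0<-\frac{1}{c(cx+d)}<-\frac{2}{c(a+d)}$; adding $\frac{a}{c}$ yields $\frac{a}{c}<g(x)<\frac{a}{c}-\frac{2}{c(a+d)}=\alpha$. The second bullet is the same computation begun from $-\frac{d}{c}<x<+\infty$, which is equivalent to $0<c(cx+d)<+\infty$; now the reciprocal stays positive, so $-\frac{1}{c(cx+d)}\in(-\infty,0)$ and $g(x)=\frac{a}{c}-\frac{1}{c(cx+d)}\in\big(-\infty,\frac{a}{c}\big)$.

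For the third bullet I would first note that the first bullet gives $g\big((-\infty,\alpha)\big)=\big(\frac{a}{c},\alpha\big)$, and since $\big(\frac{a}{c},\alpha\big)\subset(-\infty,\alpha)$ this yields $g\big((\frac{a}{c},\alpha)\big)\subset\big(\frac{a}{c},\alpha\big)$; by induction the orbit of any $x\in(\frac{a}{c},\alpha)$ stays in that interval. Next, exactly as in Lemma~\ref{lem-limit of image under g}, $g(x)-x=-\frac{c(x-\alpha)^2}{cx+d}=-\frac{c^2(x-\alpha)^2}{c(cx+d)}$, and the computation behind the first bullet shows $c(cx+d)<\frac{c(a+d)}{2}<0$ on $(-\infty,\alpha)$, so the right-hand side is positive, i.e.\ $g(x)>x$ on $\big(\frac{a}{c},\alpha\big)$. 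Applying this to each iterate, $\{g^n(x)\}_{n\in\N_0}$ is strictly increasing and bounded above by $\alpha$, hence convergent; continuity of $g$ makes the limit a fixed point, and uniqueness of the fixed point forces it to equal $\alpha$.

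I do not expect a genuine obstacle: the argument is entirely parallel to the $(a+d)c>0$ case, and the only thing requiring care is the direction of the inequalities when multiplying or taking reciprocals by quantities whose sign is governed by $c(a+d)<0$. (As an alternative one could deduce the lemma from Lemma~\ref{lem-image of interval under real parabolic map} by conjugating $g$ with the reflection $x\mapsto-x$, which produces a parabolic M\"obius map satisfying $(a+d)c>0$ with fixed point $-\alpha$ and $g(\infty)=-\frac{a}{c}$; but the direct computation is shorter and matches the style of the surrounding lemmas.)
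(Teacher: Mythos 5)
Your proof is correct and takes essentially the same route as the paper: the paper writes out the identical chain of equivalences for the first bullet (using $c(c\alpha+d)=\frac{c(a+d)}{2}<0$ and the decomposition $g(x)=\frac{a}{c}-\frac{1}{c(cx+d)}$) and explicitly leaves the second and third bullets to the reader as ``similar to'' Lemma \ref{lem-image of interval under real parabolic map} and Lemma \ref{lem-limit of image under g}. Your completion of those two parts --- the positive-reciprocal computation for the second bullet and the invariance of $\left(\frac{a}{c},\alpha\right)$ together with $g(x)-x=-\frac{c^2(x-\alpha)^2}{c(cx+d)}>0$ for the third --- is exactly the intended adaptation.
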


\begin{proof}
The following equivalent conditions prove the first part of the lemma. 
\begin{align*}
-\infty < x < \alpha & \Longleftrightarrow -\infty < c^2x + cd < c^2\alpha + cd \\
& \Longleftrightarrow -\infty < c(cx+d) < c(c\alpha+d) = \frac{c(a+d)}{2} < 0 \\
& \Longleftrightarrow \quad \ 0 > \frac{1}{c(cx+d)} > \frac{2}{c(a+d)} > -\infty \\
& \Longleftrightarrow \quad \ 0 < -\frac{1}{c(cx+d)} < -\frac{2}{c(a+d)} \\
& \Longleftrightarrow \quad \frac{a}{c} < \frac{a}{c} - \frac{1}{c(cx+d)} < \frac{a}{c} - \frac{2}{c(a+d)}  
\end{align*}
Since $ \frac{a}{c} - \frac{1}{c(cx+d)} = \frac{ax+b}{cx+d} $ and $ \frac{a}{c} - \frac{2}{c(a+d)} = \alpha $ by Lemma \ref{lem-auxiliary calculation for const}, the condition $ -\infty < x < \alpha $ is equivalent to $ \frac{a}{c}<g(x)<\alpha $. The proof of the second and third parts is similar to that of Lemma \ref{lem-image of interval under real parabolic map} and Lemma \ref{lem-limit of image under g} as well as the first part of the lemma. Then the remaining proofs are left to the reader. 
\end{proof}

\bigskip
\begin{lem} \label{lem-limit of image under g inverse1}
Let $ g $ be the real parabolic M\"obius map, which is $ g(x) = \frac{ax+b}{cx+d} $ where $ a, b, c $ and $ d $ are real numbers for $ ad-bc=1 $, $ a+d =\pm 2 $ and $ c \neq 0 $. Suppose that $ (a+d)c > 0 $. Then $ -\infty < x < \alpha $ if and only if $  -\frac{d}{c} < g^{-1}(x) < \alpha $. The point $ x $ satisfies the inequality $ -\frac{d}{c} < x < \alpha $, then the inequality 
$$
x< g^{-1}(x) < g^{-2}(x) < \cdots < g^{-n}(x) < \cdots < \alpha
$$
holds and $ \lim_{n \rightarrow +\infty} g^{-n}(x) = \alpha $. 
\end{lem}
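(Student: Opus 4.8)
The plan is to reduce the statement to Lemma \ref{lem-image of interval under real parabolic map 2}, which has already been established for the opposite sign of $(a+d)c$, by observing that $g^{-1}$ is itself a real parabolic M\"obius map of that type. Since the matrix representation of $g$ has unit determinant, its inverse is obtained by interchanging $a$ and $d$ and negating $b$ and $c$, so $g^{-1}(x) = \frac{dx-b}{-cx+a}$. This map has real coefficients, determinant $da-bc = 1$, trace $d+a = a+d = \pm 2$, and it does not fix $\infty$ because $-c \neq 0$; hence $g^{-1}$ is a real parabolic M\"obius map not fixing infinity, of the kind considered in this section. Its unique fixed point is $\frac{d-a}{2(-c)} = \frac{a-d}{2c} = \alpha$, the same as that of $g$.

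Next I would set up the coefficient bookkeeping: for $g^{-1}$ the role of $(a,b,c,d)$ is played by $(d,-b,-c,a)$, so the relevant quantities become $\frac{a'}{c'} = \frac{d}{-c} = -\frac{d}{c}$, $\ -\frac{d'}{c'} = \frac{a}{c}$, and $\alpha' = \alpha$. The key point is the sign: the hypothesis $(a+d)c > 0$ translates into $(a'+d')c' = (d+a)(-c) = -(a+d)c < 0$, which is exactly the hypothesis $(a+d)c<0$ under which Lemma \ref{lem-image of interval under real parabolic map 2} applies. So I may invoke that lemma for the map $g^{-1}$.

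It then only remains to transcribe the conclusions of Lemma \ref{lem-image of interval under real parabolic map 2}, applied to $g^{-1}$, back into the coordinates of $g$. Its first bullet, ``$-\infty < x < \alpha$ if and only if $\frac{a'}{c'} < g^{-1}(x) < \alpha$'', becomes ``$-\infty < x < \alpha$ if and only if $-\frac{d}{c} < g^{-1}(x) < \alpha$'', which is the first assertion. Its third bullet says that if $\frac{a'}{c'} < x < \alpha$, i.e. if $-\frac{d}{c} < x < \alpha$, then $x < g^{-1}(x) < (g^{-1})^2(x) < \cdots < (g^{-1})^n(x) < \cdots < \alpha$ and $\lim_{n\to+\infty}(g^{-1})^n(x) = \alpha$; since $(g^{-1})^n = g^{-n}$, this is precisely the monotone chain and the limit in the statement.

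I do not anticipate a real obstacle. The only points requiring care are checking that $g^{-1}$ inherits all the standing hypotheses (real coefficients, unit determinant, trace $\pm 2$, $c' \neq 0$) and, above all, noticing that the sign condition flips, so the correct earlier result to cite is Lemma \ref{lem-image of interval under real parabolic map 2} and not Lemma \ref{lem-image of interval under real parabolic map} or Lemma \ref{lem-limit of image under g}. As an alternative to the reduction via $g^{-1}$, one could argue directly as in Lemma \ref{lem-limit of image under g}: using $cx^2-(a-d)x-b = c(x-\alpha)^2$ one computes $g^{-1}(x)-x = \frac{c(x-\alpha)^2}{a-cx}$, checks that this is positive throughout $\left(-\frac{d}{c},\alpha\right)$ in both cases $c>0$ and $c<0$, concludes that the orbit $\{g^{-n}(x)\}$ is increasing and bounded above by $\alpha$, and identifies its limit with $\alpha$ by continuity and uniqueness of the fixed point; but the reduction to Lemma \ref{lem-image of interval under real parabolic map 2} is shorter and I would present that.
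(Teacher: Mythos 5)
Your proposal is correct and follows essentially the same route as the paper: the paper's proof also computes $g^{-1}(x)=\frac{dx-b}{-cx+a}$, notes that $\alpha$ is its fixed point and that $-c(a+d)<0$, and then applies Lemma \ref{lem-image of interval under real parabolic map 2} with $a,b,c,d$ replaced by $d,-b,-c,a$. Your write-up is in fact slightly more complete, since you explicitly verify the unit determinant, the trace, and translate $\frac{a'}{c'}=-\frac{d}{c}$ back into the original coordinates, steps the paper leaves implicit.
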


\begin{proof}
The straightforward calculation implies that $ g^{-1}(x) = \frac{dx-b}{-cx+a} $, which is also real parabolic M\"obius map. $ \alpha $ is the fixed point of $ g^{-1} $ and $ -c(a+d) < 0 $. Replace $ a,b,c $ and $ d $ of the map $ g $ by $ d,-b,-c $ and $ a $ respectively. Then apply the proof of Lemma \ref{lem-image of interval under real parabolic map 2} to the proof for the map $ x \mapsto \frac{dx-b}{-cx+a} $. It completes the proof. 
\end{proof}

\bigskip
\begin{cor} \label{cor-iterated image of point from fixed point}
Let $ g $ be the real parabolic M\"obius map, which is $ g(x) = \frac{ax+b}{cx+d} $ where $ a, b, c $ and $ d $ are real numbers for $ ad-bc=1 $, $ a+d =\pm 2 $ and $ c \neq 0 $. For every $ x \in \left( -\frac{d}{c}, \alpha \right) $, the number $ g^{N_1}(x) $ is contained in the interval $ \left( -\infty, -\frac{d}{c} \right) $ for some $ N_1 \in \N $. 
\end{cor}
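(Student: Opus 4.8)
Since the interval $\left(-\frac{d}{c},\alpha\right)$ is written with $-\frac{d}{c}$ as its left endpoint, we are implicitly in the case $(a+d)c>0$ (equivalently $-\frac{d}{c}<\alpha$, by the observation preceding Lemma \ref{lem-image of interval under real parabolic map}); the case $(a+d)c<0$ is symmetric, with Lemma \ref{lem-image of interval under real parabolic map 2} replacing Lemma \ref{lem-limit of image under g inverse1}. The plan is to follow the forward orbit of a point $x\in\left(-\frac{d}{c},\alpha\right)$, show it cannot remain in $\left(-\frac{d}{c},\alpha\right)$, and check that the first iterate leaving this interval must leave through the endpoint $-\frac{d}{c}$. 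For this I would first extract two facts from Lemma \ref{lem-limit of image under g inverse1}: rewriting its first equivalence with $x$ replaced by $g(t)$ shows that $g$ maps $\left(-\frac{d}{c},\alpha\right)$ bijectively onto $\left(-\infty,\alpha\right)$; and its monotonicity part gives $t<g^{-1}(t)$ for every $t\in\left(-\frac{d}{c},\alpha\right)$, so that whenever both $s$ and $g(s)$ lie in $\left(-\frac{d}{c},\alpha\right)$, putting $t=g(s)$ yields $g(s)=t<g^{-1}(t)=s$, i.e.\ $g(s)<s$.

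Now fix $x\in\left(-\frac{d}{c},\alpha\right)$, put $x_n=g^n(x)$, and suppose toward a contradiction that $x_n\in\left(-\frac{d}{c},\alpha\right)$ for all $n$. Applying the second fact to each consecutive pair $x_n,x_{n+1}$, the sequence $(x_n)$ is strictly decreasing and bounded below by $-\frac{d}{c}$, hence decreases to some $L\in\left[-\frac{d}{c},\alpha\right)$. If $L>-\frac{d}{c}$, continuity of $g$ at $L$ gives $g(L)=\lim_n x_{n+1}=L$, contradicting uniqueness of the fixed point $\alpha$ (note $L<\alpha$); if $L=-\frac{d}{c}$, then $x_n\to-\frac{d}{c}$ with $x_n>-\frac{d}{c}$, so $|g(x_n)|\to\infty$, contradicting $x_{n+1}\to-\frac{d}{c}$. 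Hence there is a least $N_1\in\N$ with $x_{N_1}\notin\left(-\frac{d}{c},\alpha\right)$; since $x_{N_1-1}\in\left(-\frac{d}{c},\alpha\right)$, the first fact gives $x_{N_1}=g(x_{N_1-1})\in\left(-\infty,\alpha\right)$, and combining this with $x_{N_1}\notin\left(-\frac{d}{c},\alpha\right)$ forces $x_{N_1}\in\left(-\infty,-\frac{d}{c}\,\right]$.

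The one point to watch is the boundary value $x_{N_1}=-\frac{d}{c}$, which occurs exactly for the countably many initial points $x\in\{\,g^{-n}(-\frac{d}{c}):n\ge 1\,\}$ and is immediately followed by $x_{N_1+1}=\infty$; read with $-\frac{d}{c}$ strictly excluded one discards these exceptional points (they play no role in the pre-periodic sequence built in the next section), or one argues constructively instead: by Lemma \ref{lem-limit of image under g inverse1} the points $g^{-n}(-\frac{d}{c})$ increase to $\alpha$ and cut $\left(-\frac{d}{c},\alpha\right)$ into half-open blocks, and on the $n$-th block, using that $g$ is increasing there, $x$ lands in $\left(-\infty,-\frac{d}{c}\right)$ after exactly $n+1$ steps. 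I expect the main (and only mildly delicate) obstacle to be this endpoint bookkeeping together with the verification that the orbit exits toward $-\frac{d}{c}$ rather than toward $\alpha$ --- which is exactly what the image identity $g\!\left(\left(-\frac{d}{c},\alpha\right)\right)=\left(-\infty,\alpha\right)$ supplies; everything else is monotonicity plus uniqueness of the fixed point.
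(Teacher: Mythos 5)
Your proposal is correct, and it is worth noting that the paper itself supplies no proof at all for this corollary --- it is stated as an immediate consequence of Lemma \ref{lem-limit of image under g inverse1}, so there is nothing to compare line by line; your argument is the natural way to fill the gap (read the first equivalence of Lemma \ref{lem-limit of image under g inverse1} as the image identity $g\left(\left(-\frac{d}{c},\alpha\right)\right)=(-\infty,\alpha)$, use the monotonicity part to force the orbit to decrease, rule out an interior or endpoint limit, and conclude the first exit lands in $\left(-\infty,-\frac{d}{c}\right]$). Your restriction to the case $(a+d)c>0$ is also correct, since $-\frac{d}{c}<\alpha$ is equivalent to $\frac{a+d}{2c}>0$.

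The one substantive point in your write-up is the boundary case, and you are right that it is a genuine defect in the statement rather than in your proof: for $x=g^{-n}\left(-\frac{d}{c}\right)$ with $n\geq 1$ (all of which lie in $\left(-\frac{d}{c},\alpha\right)$, since $g^{-1}\left(-\frac{d}{c}\right)=-\frac{d}{c}+\frac{1}{c(a+d)}$ lies there and Lemma \ref{lem-limit of image under g inverse1} keeps the backward orbit there), the forward orbit reaches $-\frac{d}{c}$, then $\infty$, then $\frac{a}{c}$, and thereafter decreases to $\alpha$ inside $\left(\alpha,\frac{a}{c}\right)$ by Lemma \ref{lem-image of interval under real parabolic map} and Lemma \ref{lem-limit of image under g}; such an orbit never enters the open interval $\left(-\infty,-\frac{d}{c}\right)$, so the corollary as literally stated fails on this countable set. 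Your two remedies are both adequate, and your remark that the exceptional points are harmless for Section 6 is accurate: the construction there only needs some $q$ near $\alpha$ with $g^{N_1}(q)$ deep in $\left(-\infty,-\frac{d}{c}\right)$, and your block decomposition shows $g^{n+1}$ maps each open block $\left(g^{-n}\left(-\frac{d}{c}\right),g^{-(n+1)}\left(-\frac{d}{c}\right)\right)$ onto all of $\left(-\infty,-\frac{d}{c}\right)$, which is more than enough.
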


In the case that $ (a+d)c < 0 $, the lemma holds as follows. 

\bigskip
\begin{lem} \label{lem-limit of image under g inverse2}
Let $ g $ be the real parabolic M\"obius map, which is $ g(x) = \frac{ax+b}{cx+d} $ where $ a, b, c $ and $ d $ are real numbers for $ ad-bc=1 $, $ a+d =\pm 2 $ and $ c \neq 0 $. Suppose that $ (a+d)c < 0 $. Then $ -\infty < x < \alpha $ if and only if $  \alpha < g^{-1}(x) < -\frac{d}{c} $. If the inequality $ \alpha < x < \infty $ holds, then the inequality 
$$
\alpha < \cdots < g^{-n}(x) < g^{-(n-1)}(x)< \cdots < g^{-1}(x) < x 
$$
holds and $ \lim_{n \rightarrow +\infty} g^{-n}(x) = \alpha $.  
\end{lem}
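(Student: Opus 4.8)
The plan is to deduce the statement from the already-settled case $(a+d)c>0$ by passing to the inverse map, in the same spirit as the proof of Lemma \ref{lem-limit of image under g inverse1}. First I would note that $g^{-1}(x)=\frac{dx-b}{-cx+a}$ is again a real parabolic M\"obius map that does not fix $\infty$: its matrix $\left(\begin{smallmatrix}d&-b\\-c&a\end{smallmatrix}\right)$ has determinant $ad-bc=1$, trace $a+d=\pm2$ and nonzero lower-left entry $-c$, and its unique fixed point is still $\alpha$ since $g(\alpha)=\alpha$ forces $g^{-1}(\alpha)=\alpha$. The point of the substitution is that the sign quantity relevant to $g^{-1}$ is $(a+d)(-c)=-(a+d)c$, which is positive precisely when $(a+d)c<0$; so under our hypothesis $g^{-1}$ is governed by Lemma \ref{lem-image of interval under real parabolic map} and Lemma \ref{lem-limit of image under g}. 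One must also track the distinguished points: since $g^{-1}(\infty)=\frac{d}{-c}=-\frac{d}{c}$ and $g^{-1}$ has its pole at $x=\frac{a}{c}$, the role of ``$\frac{a}{c}$'' for $g$ is played by $-\frac{d}{c}$ for $g^{-1}$, and that of ``$-\frac{d}{c}$'' by $\frac{a}{c}$, while $\alpha$ is unchanged.

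For the equivalence I would apply Lemma \ref{lem-image of interval under real parabolic map} directly to $g^{-1}$, which yields that $\alpha<x<+\infty$ if and only if $\alpha<g^{-1}(x)<-\frac{d}{c}$ (and, incidentally, that $-\infty<x<\frac{a}{c}$ if and only if $-\frac{d}{c}<g^{-1}(x)<+\infty$). This is the first assertion.

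For the chain of inequalities and the limit I would first apply Lemma \ref{lem-limit of image under g} to $g^{-1}$ on the interval $\big(\alpha,-\frac{d}{c}\big)$ — the image of $\infty$ under $g^{-1}$ now playing the role of $\frac{a}{c}$ — obtaining that $\alpha<y<-\frac{d}{c}$ implies $\alpha<\cdots<g^{-n}(y)<g^{-(n-1)}(y)<\cdots<g^{-1}(y)<y$ with $g^{-n}(y)\to\alpha$. Given $x$ with $\alpha<x<+\infty$, the equivalence above puts $g^{-1}(x)$ into $\big(\alpha,-\frac{d}{c}\big)$, so this chain applies with $y=g^{-1}(x)$; it remains only to prepend the single inequality $g^{-1}(x)<x$. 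That I would get by the sign argument of Lemma \ref{lem-limit of image under g}: $g^{-1}(x)-x=-\dfrac{c^{2}(x-\alpha)^{2}}{(-c)(-cx+a)}$, and the linear denominator $(-c)(-cx+a)$ equals $\tfrac{-(a+d)c}{2}>0$ at $x=\alpha$ and vanishes only at $x=\frac{a}{c}$, which satisfies $\frac{a}{c}<\alpha$ since $(a+d)c<0$; hence the denominator is positive on all of $\big(\alpha,+\infty\big)$ and $g^{-1}(x)-x<0$ there. Concatenating gives $\alpha<\cdots<g^{-n}(x)<\cdots<g^{-1}(x)<x$ and $\lim_{n\to+\infty}g^{-n}(x)=\alpha$.

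I do not expect a genuine obstacle; the argument is bookkeeping on top of the $(a+d)c>0$ lemmas. The only step that is not an automatic ``replace $a,b,c,d$ by $d,-b,-c,a$'' is handling starting points $x\in(\alpha,+\infty)$ that lie outside $\big(\alpha,-\frac{d}{c}\big)$: for these one first uses the equivalence to land inside $\big(\alpha,-\frac{d}{c}\big)$ after one application of $g^{-1}$, and separately supplies $g^{-1}(x)<x$ by the sign computation above.
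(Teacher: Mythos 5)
Your reduction to the inverse map is exactly the route the paper intends: the paper gives no proof for this lemma at all, the implicit argument being the same substitution $a,b,c,d \mapsto d,-b,-c,a$ used for Lemma \ref{lem-limit of image under g inverse1}. Your bookkeeping is correct ($(a'+d')c' = -(a+d)c > 0$, so $g^{-1}$ falls under Lemma \ref{lem-image of interval under real parabolic map} and Lemma \ref{lem-limit of image under g}, with $\frac{a'}{c'} = -\frac{d}{c}$ and the fixed point unchanged), the sign computation showing $g^{-1}(x) - x < 0$ on $(\alpha, +\infty)$ is right, and your handling of starting points outside $\left( \alpha, -\frac{d}{c} \right)$ by first applying the equivalence is the one step that genuinely needs saying.

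One thing you should not pass over silently: what you call ``the first assertion,'' namely $\alpha < x < +\infty$ if and only if $\alpha < g^{-1}(x) < -\frac{d}{c}$, is not what the lemma states. The lemma states $-\infty < x < \alpha$ if and only if $\alpha < g^{-1}(x) < -\frac{d}{c}$, and that version is false: take $a=d=1$, $b=0$, $c=-1$, so that $g(x)=\frac{x}{1-x}$, $\alpha = 0$, $-\frac{d}{c}=1$ and $g^{-1}(x)=\frac{x}{x+1}$; then $x=-\frac{1}{2} < \alpha$ but $g^{-1}(x) = -1 \notin (0,1)$. Your version is the correct analogue of the first part of Lemma \ref{lem-image of interval under real parabolic map} applied to $g^{-1}$ (it is the statement obtained from Lemma \ref{lem-limit of image under g inverse1} by the sign flip), and it is also the statement actually needed to seed the chain of inequalities in the second part. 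So your mathematics is sound, but you are proving a corrected statement rather than the printed one; that discrepancy must be flagged explicitly rather than absorbed without comment.
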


\section{Non stability of real parabolic M\"obius map}
In this section, we prove the non stability of the sequence $ \{b_n \}_{n \in \N_0} $ with for any initial point $ b_0 \in \R $ satisfying $ b_{n+1} = g(b_n) $ for every $ n \in \N_0 $. %However, since $ g \left(-\frac{d}{c} \right) = \infty $, if the point $ \infty $ is contained in the sequence $ \{b_n \}_{n \in \N_0} $, then the Hyers-Ulam stability of sequence is not defined. 

\bigskip
\begin{thm}
Let $ g $ be the real parabolic M\"obius map, which is $ g(x) = \frac{ax+b}{cx+d} $ where $ a, b, c $ and $ d $ are real numbers for $ ad-bc=1 $, $ a+d =\pm 2 $ and $ c \neq 0 $. Let $ \{b_n \}_{n \in \N_0} $ be the sequence satisfying $ b_{n+1} = g(b_n) $ for every $ n \in \N_0 $. Then for any $ b_0 \in \R $ and for any given $ \e > 0 $, the sequence $ \{b_n \}_{n \in \N_0} $ has no Hyers-Ulam stability. 
\end{thm}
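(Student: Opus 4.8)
The plan is to run the same mechanism as in the proof of Proposition~\ref{prop-no hyers-ulam of parabolic mobius}, but since $\hat{\R}$ carries no horocycle of finite diameter, the conjugacy $h(x)=\frac{1}{c(x-\alpha)}$ of Lemma~\ref{lem-conjugation h between g and translation} takes over the organizing role: it restricts to a homeomorphism of $\hat{\R}$ with $h(\alpha)=\infty$ and $h\circ g\circ h^{-1}(w)=w+\sigma$ for a fixed $\sigma\in\{1,-1\}$. Consequently $h(g^{n}(x_{0}))=h(x_{0})+\sigma n\to\sigma\infty$, so $g^{n}(x_{0})\to\alpha$ in $\hat{\R}$, with $g^{n}(x_{0})\in\R$ and $|g^{n}(x_{0})-\alpha|=\frac{1}{|c|\,|h(x_{0})+\sigma n|}$ decreasing to $0$ for all large $n$; in particular there is $N_{1}\in\N$ with $g^{n}(b_{0})\in B(\alpha,\tfrac{\e}{2})$ for every $n\ge N_{1}$. (The same orbit picture follows from Lemma~\ref{lem-image of interval under real parabolic map} and Lemma~\ref{lem-limit of image under g}: every forward orbit eventually enters the interval with endpoints $\alpha$ and $\tfrac{a}{c}$ and moves monotonically to $\alpha$.) Under $h$, a point lies within $\tfrac{\e}{2}$ of $\alpha$ exactly when its $h$-image has modulus $\ge\tfrac{2}{|c|\e}$, and it lies far from $\alpha$ in the Euclidean metric exactly when its $h$-image is close to $0$; this replaces ``near the fixed point'' versus ``near the far end of a horocycle''.

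Next I would build, for each $\e>0$, the pre-periodic witness $\{a_{n}\}_{n\in\N_{0}}$ with $a_{0}=b_{0}$. Fix $N_{1}$ as above, choose a non-integer $\delta$ with $0<\delta\le\frac{1}{|c|(1+2\e)}$, pick an integer $N_{2}>\delta+\frac{2}{|c|\e}$, and let $q\in\R$ be the point with $h(q)=-\sigma N_{2}+\delta$. Then for $0\le m\le 2N_{2}$ one has $h(g^{m}(q))=-\sigma N_{2}+\delta+\sigma m$, a non-zero real since $\delta\notin\Z$, so each $g^{m}(q)$ is finite; moreover $|h(q)|\ge N_{2}-\delta$ and $|h(g^{2N_{2}}(q))|\ge N_{2}-\delta$ force $|q-\alpha|<\tfrac{\e}{2}$ and $|g^{2N_{2}}(q)-\alpha|<\tfrac{\e}{2}$, while $|h(g^{N_{2}}(q))|=\delta$ forces $|g^{N_{2}}(q)-\alpha|=\frac{1}{|c|\delta}\ge 1+2\e$. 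Now set $a_{n}=g^{n}(b_{0})$ for $0\le n\le N_{1}$, $a_{N_{1}+1+m}=g^{m}(q)$ for $0\le m\le 2N_{2}-1$, and $a_{k+2N_{2}}=a_{k}$ for $k\ge N_{1}+1$; the sequence $\{a_{n}\}$ is pre-periodic with period $2N_{2}$.

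Then I would verify the three properties used in Proposition~\ref{prop-no hyers-ulam of parabolic mobius}. For every index $n$ not equal to $N_{1}$ and not of the form $N_{1}+2kN_{2}$ one has $a_{n+1}=g(a_{n})$ exactly; at $n=N_{1}$, $|a_{N_{1}+1}-g(a_{N_{1}})|=|q-g^{N_{1}+1}(b_{0})|\le|q-\alpha|+|\alpha-g^{N_{1}+1}(b_{0})|<\e$; and at each seam $n=N_{1}+2kN_{2}$, $|a_{n+1}-g(a_{n})|=|q-g^{2N_{2}}(q)|\le|q-\alpha|+|\alpha-g^{2N_{2}}(q)|<\e$, so $|a_{n+1}-g(a_{n})|\le\e$ for all $n\in\N_{0}$. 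Finally, putting $n_{k}=N_{1}+1+N_{2}+2kN_{2}$ we get $a_{n_{k}}=g^{N_{2}}(q)$ while $b_{n_{k}}=g^{n_{k}}(b_{0})\in B(\alpha,\tfrac{\e}{2})$, hence $|a_{n_{k}}-b_{n_{k}}|\ge|g^{N_{2}}(q)-\alpha|-|\alpha-b_{n_{k}}|\ge 1+2\e-\tfrac{\e}{2}>1$ for all $k\ge 0$ (and the same bound holds against any other solution $g^{n}(b_{0}')$, which also tends to $\alpha$). Since $\e>0$ was arbitrary, no gauge $K(\e)\to 0$ can dominate $|a_{n}-b_{n}|$, so $\{b_{n}\}$ has no Hyers-Ulam stability; the sign of $(a+d)c$ never enters, since everything is done after conjugating to $w\mapsto w+\sigma$.

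The one genuinely delicate point is calibrating the three quantities simultaneously — the entry error $|q-\alpha|$ small, the excursion $|g^{N_{2}}(q)-\alpha|$ at least $1+2\e$, the seam error $|g^{2N_{2}}(q)-\alpha|$ small again — while keeping all intermediate iterates $g^{m}(q)$ finite. In the complex case this was arranged by a horocycle of prescribed diameter; here it reduces to the single requirement that $h(q)$ be a real number close to the integer $-\sigma N_{2}$ but not itself an integer, after which the bookkeeping is exactly parallel to Proposition~\ref{prop-no hyers-ulam of parabolic mobius}.
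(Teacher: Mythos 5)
Your proposal is correct, and at the level of overall strategy it is the same as the paper's: build a pre-periodic approximate solution $\{a_n\}$ that shadows $\{b_n\}$ into an $\frac{\e}{2}$-neighbourhood of $\alpha$, then makes an excursion of Euclidean size at least $1+2\e$ before returning, so that $|a_n-b_n|>1$ along an arithmetic progression of indices and no gauge $K(\e)\to 0$ can exist. Where you genuinely diverge from the paper is in how the excursion is engineered. The paper's Section 5 develops the real orbit combinatorics directly: it splits into the cases $(a+d)c>0$ and $(a+d)c<0$, tracks the forward and backward orbits through the intervals cut out by $-\frac{d}{c}$, $\alpha$, $\frac{a}{c}$ (Lemma \ref{lem-image of interval under real parabolic map} through Lemma \ref{lem-limit of image under g inverse2}), and uses Corollary \ref{cor-iterated image of point from fixed point} to find a point $q$ near $\alpha$ whose forward orbit escapes into $\left(-\infty,-\frac{d}{c}\right)$ far from $\alpha$ before returning. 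You instead observe that the conjugacy $h(x)=\frac{1}{c(x-\alpha)}$ of Lemma \ref{lem-conjugation h between g and translation} preserves $\hat{\R}$ and linearizes $g$ to $w\mapsto w+\sigma$, so that $|g^m(q)-\alpha|=\frac{1}{|c|\,|h(q)+\sigma m|}$; the entire calibration (entry error $<\frac{\e}{2}$, excursion $\geq 1+2\e$ at $m=N_2$, return error $<\frac{\e}{2}$ at $m=2N_2$, all iterates finite) then reduces to choosing $h(q)=-\sigma N_2+\delta$ with $\delta$ a small non-integer and $N_2$ large, and your inequalities check out. This buys a shorter and fully explicit argument that makes the sign of $(a+d)c$ irrelevant and renders most of Section 5 unnecessary, essentially deriving the real case from the complex-case machinery; the paper's route, by contrast, is self-contained in real-variable terms and yields the precise itinerary of the real orbit through the subintervals, which is of independent descriptive value. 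One small point worth making explicit in a write-up: the degenerate initial points ($b_0=\alpha$, or $b_0$ in the backward orbit of $-\frac{d}{c}$, where $h(b_0)=\infty$ or some $b_n=\infty$) need a one-line separate remark, since your identity $|g^n(b_0)-\alpha|=\frac{1}{|c|\,|h(b_0)+\sigma n|}$ does not literally apply there, though the conclusion $b_n\to\alpha$ (and hence the final estimate) still does.
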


\begin{proof}
Assume first that $ (a+d)c > 0 $. The proof for the case that $ (a+d)c < 0 $ is similar. Corollary \ref{cor-second iterated image under g} and Corollary \ref{cor-iterated image of point from fixed point} imply that for each $ x \in \R $ there exists $ N \in \N $ such that $ g^N(x) \in \left( \alpha, \frac{a}{c} \right) $. Lemma \ref{lem-limit of image under g} and Lemma \ref{lem-limit of image under g inverse1} imply that $ \lim_{n \rightarrow \pm \infty} g^n(x) = \alpha $. Using same method in the proof of Proposition \ref{prop-no hyers-ulam of parabolic mobius}, we show that there exists a sequence $ \{a_n \}_{n \in \N_0} $ which satisfies the following properties 
\begin{enumerate}
\item $ | a_{n+1} - g(a_n) | \leq \e $ for all $ n \in \N_0 $, 
\item the sequence $ \{ a_n\}_{n \in \N_0} $ is pre-periodic one, and 
\item $ | a_{N_0} - b_{N_0} | \geq 1 + \e $ for some big enough $ N_0 \in \N $. 
\end{enumerate}
Then $ | a_n - b_n | \geq 1 $ for infinitely many $ n \in \N $ due to the periodicity of the sequence $ \{ a_n\}_{n \in \N_0} $. Hence, $ \{ b_n\}_{n \in \N_0} $ has no Hyers-Ulam stability. We construct the sequence $ \{ a_n\}_{n \in \N_0} $ satisfying the above properties. 
\smallskip \\
For every $ x \in \R $, for some $ N_0 \in \N $, $ g^{N_0}(x) $ is contained in the interval $ \left( \alpha, \alpha + \frac{\e}{2} \right) $ by Lemma \ref{lem-limit of image under g}. Observe that the distance between any point $ x' \in \left( \alpha - \frac{\e}{2}, \alpha \right) $ and $ g^{N_0}(x) $ is less than $ \e $. Moreover, the point $ x' $ satisfies that $ g^{N_1}(x') \in \left(-\infty, -\frac{d}{c} \right) $ by Corollary \ref{cor-iterated image of point from fixed point}. Since $ g^{N_1}(x') $ can be an arbitrary point in the interval $ \left(-\infty, -\frac{d}{c} \right) $ and the distance between $ \alpha $ and $ -\frac{d}{c} $ is $ \frac{1}{|c|} $, we may choose the point $ q $ such that $ q \in \left( \alpha - \frac{\e}{2}, \alpha \right) $ and $ | g^{N_1}(q) - \alpha | > \max \left\{ \frac{1}{|c|},\ 1+\e \right\} $. Thus $ g^{N_1+N_2}(q) \in \left( \alpha, \alpha + \frac{\e}{2} \right) $ for some $ N_2 \in \N $. 
\medskip \\
Define the pre-periodic sequence $ \{ a_n\}_{n \in \N_0} $ as follows 
$$
\{ a_0, g(a_0),g^2(a_0), \ldots ,g^{N_0}(a_0), q, g(q), \ldots,g^{N_1}(q), \ldots, g^{N_1+N_2}(q), q ,g(q), \ldots \}  .
$$
Thus $ |a_{n+1} - g(a_n) | \leq \e $ for all $ n \in \N_0 $ and $ a_{N_1+N_2+k} = a_{k+1} $ for every $ k \geq N_0 $. Furthermore, since $ g^{N_1}(q) = a_{N_0+N_1+2} $ and $ b_n \in \left( \alpha, \alpha + \frac{\e}{2} \right) $ for all big enough $ n \geq N $, the inequality 
\begin{align*}
| a_{N_0+N_1+2 + k(N_1 + N_2)} - b_{N_0+N_1+2} | &= | a_{N_0+N_1+2} - b_{N_0+N_1+2} |  \\
& \geq | a_{N_0+N_1+2} - \alpha | - | \alpha - b_{N_0+N_1+2} | \\
& \geq 1+\e - \frac{\e}{2} \\
& > 1
\end{align*}
for all $ k \geq N_0 + N $. Then $ |b_n - a_n| > 1 $ for infinitely many $ n \in \N $. Hence, the sequence $ \{ b_n\}_{n \in \N_0} $ does not have Hyers-Ulam stability. 
\end{proof}

%\newpage

\end{document}